\documentclass[10pt]{amsart}
\usepackage[normalem]{ulem}
\usepackage{amsmath,amssymb,amsthm,graphicx,epstopdf,mathrsfs,url}
\usepackage[usenames,dvipsnames]{color}
\usepackage{dsfont}   %Added by Markus
\definecolor{darkred}{rgb}{0.4,0.1,0.1}
\definecolor{darkblue}{rgb}{0.1,0.1,0.4}
\definecolor{darkgrey}{rgb}{0.5,0.5,0.5}
\usepackage[colorlinks=true,linkcolor=darkred,citecolor=Blue]{hyperref}
% \usepackage{refcheck}

%\usepackage{showkeys}
%
% \swapnumbers
\setlength{\parskip}{0.1cm}
\numberwithin{equation}{section}
\theoremstyle{plain}% default
\newtheorem{thm}{Theorem}[section]
%[section]
\newtheorem{lem}[thm]{Lemma}

\newtheorem{prop}[thm]{Proposition}
\newtheorem{cor}[thm]{Corollary}

\theoremstyle{remark}

\theoremstyle{plain}

\newcommand{\hyp}[1]{$C^{2}$-hypersurface as in Definition~\ref{definition_hypersurface}}
%\newcommand{\rmd}{\mathrm{d}\,}

   %Changed by Markus

\renewcommand\Im{\textup{Im}\,}

\DeclareMathOperator\ran{ran}

%%%%

\newcommand{\dom}{\mathrm{dom}\,}

\begin{document}
\title[]{On Dirac operators in \boldmath{$\mathbb{R}^3$} with electrostatic and Lorentz scalar \boldmath{$\delta$}-shell interactions}

\author[J. Behrndt]{Jussi Behrndt}
\address{Institut f\"{u}r Angewandte Mathematik\\
Technische Universit\"{a}t Graz\\
 Steyrergasse 30, 8010 Graz, Austria\\
E-mail: {behrndt@tugraz.at}}

\author[P. Exner]{Pavel Exner}
\address{Doppler Institute for Mathematical Physics and Applied Mathematics\\ 
Czech Technical University in Prague\\ B\v{r}ehov\'{a} 7, 11519 Prague, Czech Republic,
{\rm and}
Department of Theoretical Physics\\ 
Nuclear Physics Institute, 
Czech Academy of Sciences, 
25068 \v{R}e\v{z}, Czech Republic\\
E-mail: {exner@ujf.cas.cz}
}

\author[M. Holzmann]{Markus Holzmann}
\address{Institut f\"{u}r Angewandte Mathematik\\
Technische Universit\"{a}t Graz\\
 Steyrergasse 30, 8010 Graz, Austria\\
E-mail: {holzmann@math.tugraz.at}}

\author[V. Lotoreichik]{Vladimir Lotoreichik}
\address{
Department of Theoretical Physics\\
Nuclear Physics Institute, Czech Academy of Sciences, 
25068 \v{R}e\v{z}, Czech Republic\\
E-mail: {lotoreichik@ujf.cas.cz }
}

\begin{abstract}
  In this article Dirac operators $A_{\eta, \tau}$ coupled with combinations of electrostatic and Lorentz scalar $\delta$-shell interactions of constant strength $\eta$ and $\tau$, respectively, supported on compact surfaces $\Sigma \subset \mathbb{R}^3$ are studied. In the rigorous definition of these operators the $\delta$-potentials are modelled by coupling conditions at $\Sigma$. In the proof of the self-adjointness of $A_{\eta, \tau}$ a Krein-type resolvent formula and a Birman-Schwinger principle are obtained. With their help a detailed study of the qualitative spectral properties of $A_{\eta, \tau}$ is possible. In particular, the essential spectrum of $A_{\eta, \tau}$ is determined, it is shown that at most finitely many discrete eigenvalues can appear, and several symmetry relations in the point spectrum are obtained. Moreover, the nonrelativistic limit of $A_{\eta, \tau}$ is computed and it is discussed that for some special interaction strengths $A_{\eta, \tau}$ is decoupled to two operators acting in the domains with the common boundary $\Sigma$.
\end{abstract}

\keywords{Dirac operator; shell interaction; coupling condition; spectral analysis; nonrelativistic limit}

\subjclass[2010]{Primary 35Q40; Secondary 81Q10} 
\maketitle

\section{Introduction}

Working with the equations of motion there is a particular interest to find solutions which are exact and which correspond to specific physical systems. Such an ideal treatment was possible, for instance, in the quantum mechanical explanation of the spectral properties of one-electron atoms. However, such situations are rare and hence, the original model is often replaced by an idealized one which is mathematically accessible and reflects at the same time the physical reality to a reasonable degree. In many problems this can be achieved by using singular potentials supported on sets of measure zero. This method is used highly successfully in nonrelativistic quantum mechanics, cf. the monograph~\cite{AGHH05}.

Life becomes more complicated when the systems under consideration are relativistic, described by the Dirac equation. Here there are only very few solvable models and the physics becomes more complicated when other than electromagnetic forces enter the picture. An example of such a situation is the quark dynamics within the nucleon. An early attempt to describe it was made by Bogolioubov, Struminski and Tavkhelidze, as cited in~\cite{Bog68}, who proposed to model them as confined to a spherical cavity. The nature of the confinement was not consistent there, but the idea inspired a little later the so-called MIT bag model~\cite{C75, CJJT74, CJJTW74, DJJK75, J75}. 

The requirement of relativistic invariance allows to distinguish several types of potentials specified by their behaviour with respect to the Lorentz group transformations~\cite[Section 4.2]{T92}. The most common among them are the scalar and electromagnetic ones, and among the latter the electrostatic one plays an important role. In this paper, we consider combinations of scalar and electrostatic potentials, which differ by the presence and absence, respectively, of the Dirac matrix $\beta$; a useful feature of such potential combinations is that the mentioned matrix gives rise to one of the possible supersymmetries of the Dirac equation~\cite[Section~5.1]{T92}. %{\color{darkred}On the other hand, restricting our attention to such potentials we ignore effects the physical essence of which belongs rather to the quantum field theory such as the anomalous magnetic moment.} 

Let us now describe the aim of the paper in more detail.
To set the stage let $\Sigma \subset \mathbb{R}^3$ be a closed, bounded, and sufficiently smooth surface which splits~$\mathbb{R}^3$ into a bounded domain $\Omega_+$ and an unbounded domain $\Omega_-$, and let $\nu$ be the unit normal vector field at $\Sigma$ pointing outwards $\Omega_+$.
Our goal is to study Dirac operators acting in $L^2(\mathbb{R}^3)^4$ which are formally given by
\begin{equation} \label{def_A_eta_formal}
  A_{\eta, \tau} = -i c \sum_{j=1}^3 \alpha_j \partial_j + m c^2 \beta + (\eta I_4 + \tau \beta) \delta_\Sigma,
\end{equation}
where $m$ is the mass of the particle, $c$ is the speed of light, $\alpha_1, \alpha_2, \alpha_3, \beta \in \mathbb{C}^{4 \times 4}$ are the Dirac matrices defined in \eqref{def_Dirac_matrices} below, $I_4 \in \mathbb{C}^{4 \times 4}$ is the identity matrix, $\eta, \tau \in \mathbb{R}$ are the interaction strengths, and the $\delta$-distribution acts in a symmetric way as
\begin{equation*}
  \delta_\Sigma f = \frac{1}{2} (f_+|_\Sigma + f_-|_\Sigma), \qquad f_\pm = f \upharpoonright \Omega_\pm.
\end{equation*}
In order to introduce $A_{\eta, \tau}$ in a mathematically rigorous form as a self-adjoint operator in $L^2(\mathbb{R}^3)^4$ we require that functions in the domain of $A_{\eta, \tau}$ satisfy suitable coupling conditions on $\Sigma$. To find them, we note first that the distribution $A_{\eta, \tau} f$ acts on a test function $\varphi \in C_0^\infty(\mathbb{R}^3)^4$ as
\begin{equation*}
  \langle A_{\eta, \tau} f, \varphi \rangle = \int_{\mathbb{R}^3} f \cdot \overline{\left( -i c \alpha \cdot \nabla \varphi + m c^2 \beta \varphi \right)} \text{d} x + \int_\Sigma (f_+|_\Sigma + f_-|_\Sigma) \cdot \frac{1}{2} (\eta I_4 + \tau \beta) \overline{\varphi|_\Sigma} \text{d} \sigma,
\end{equation*}
where the notation $\alpha \cdot x = \alpha_1 x_1 + \alpha_2 x_2 + \alpha_3 x_3$ for a vector $x = (x_1,x_2,x_3)$ was used.
On the other hand, one would expect that the operator $A_{\eta, \tau}$ acts for $x \notin \Sigma$ as 
\begin{equation} \label{action_A_eta_formal}
  A_{\eta, \tau} f(x) = -i c \alpha \cdot \nabla f(x) + m c^2 \beta f(x),
\end{equation}
which leads via integration by parts in $\Omega_\pm$ to the observation that
\begin{equation*}
  \begin{split}
    \langle A_{\eta, \tau} f, \varphi \rangle &= \int_{\Omega_+ \cup \Omega_-} \big( -i c \alpha \cdot \nabla f + m c^2 \beta f \big) \cdot \overline{\varphi} \text{d} x\\
    &=\int_{\mathbb{R}^3} f \cdot \overline{\left( -i c \alpha \cdot \nabla \varphi + m c^2 \beta \varphi \right)} \text{d} x - \int_\Sigma i c \alpha \cdot \nu (f_+|_\Sigma - f_-|_\Sigma) \cdot \overline{\varphi|_\Sigma} \text{d} \sigma
  \end{split}
\end{equation*}
should hold for $f \in \dom A_{\eta, \tau}$. Comparing the two expressions for $\langle A_{\eta, \tau} f, \varphi \rangle$ we conclude that a function $f \in \dom A_{\eta, \tau}$ should satisfy the jump condition
\begin{equation} \label{equation_jump_condition_intro}
  -i c \alpha \cdot \nu (f_+|_\Sigma - f_-|_\Sigma) = \frac{1}{2} (\eta I_4 + \tau \beta) (f_+|_\Sigma + f_-|_\Sigma).
\end{equation}
Therefore, the operator $A_{\eta, \tau}$ corresponding to the formal differential expression~\eqref{def_A_eta_formal} should be defined for functions satisfying the coupling condition~\eqref{equation_jump_condition_intro} and should act for $x \notin \Sigma$ as in~\eqref{action_A_eta_formal}.

The mathematical study of Dirac operators with singular potentials started in the 1980s, when Gesztesy and \v{S}eba considered one dimensional Dirac operators with point interactions \cite{AGHH05, GS87, S89}; for more recent contributions on Dirac operators with point interactions see, e.g., \cite{BMP17, CMP13, PR14}. Based on \cite{GS87} and a decomposition to spherical harmonics Dittrich, Exner, and \v{S}eba investigated the operator $A_{\eta, \tau}$ in the case that $\Sigma$ is the sphere in $\mathbb{R}^3$. In \cite{DES89} they showed for a wide class of parameters the self-adjointness of $A_{\eta, \tau}$ and they were able to compute its resolvent and some of its spectral properties. While some of the interesting properties of $A_{\eta, \tau}$ like the decoupling of the operator to two Dirac operators acting in $\Omega_\pm$ for interaction strengths satisfying $\eta^2 - \tau^2 = -4 c^2$ were observed in \cite{DES89}, compare also Lemma~\ref{lemma_transmission_condition} below, others like, e.g., unexpected spectral effects for $\eta^2 - \tau^2 = 4 c^2$ could not be seen with this approach due to the decomposition to the spherical harmonics.

It took then 25 years until Dirac operators with singular interactions supported on more general surfaces in $\mathbb{R}^3$ were studied. In a series of papers \cite{AMV14, AMV15, AMV16} Arrizabalaga, Mas, and Vega showed the self-adjointness and derived several basic properties of $A_{\eta, \tau}$, in particular for the special case of purely electrostatic interactions, i.e. for $\tau = 0$. Moreover, for purely electrostatic and purely scalar interactions it was shown in \cite{MP18} that $A_{\eta, \tau}$ can be regarded as a limit of Dirac operators with squeezed potentials. Inspired by the approach in \cite{AMV14} the authors of the present paper applied the abstract concept of quasi boundary triples and Weyl functions from extension theory of symmetric operators to Dirac operators with singular interactions and provided in the recent paper  \cite{BEHL17} a deeper analysis of the spectral properties of $A_{\eta, 0}$ for purely electrostatic potentials. We should note that in all of the above mentioned papers the case $\eta^2-\tau^2=4c^2$ was excluded and it turns out that in this {\it critical case} the operator $A_{\eta, \tau}$ has different properties as in the {\it noncritical case} $\eta^2 - \tau^2 \neq 4 c^2$. For purely electrostatic interactions the self-adjointness of $A_{\eta, 0}$ for critical $\eta = \pm 2 c$ was studied in \cite{BH17, OV17} and some surprising spectral effects like possible appearance of additional essential spectrum were shown. Eventually, in \cite{HOP17} a detailed study of the spectral properties of $A_{0, \tau}$ for purely scalar potentials was provided; in particular, it was shown that the discrete eigenvalues in the large mass limit are characterized by an effective operator on the surface $\Sigma$. Furthermore, there is a great interest recently in the study of self-adjoint Dirac operators on domains with boundary conditions, see, e.g., \cite{ALTR17, ALTMR18, BFSB17_1, BFSB17_2, FS14, LTO18, LO18, MOP18, S95}.

Our goal in this note is to extend many of the above mentioned results, which were shown for purely electrostatic or purely scalar interactions, to the more general case of combinations of electrostatic and scalar interactions. For that we use a uniform approach which is based on the considerations in \cite{BEHL17, BH17}. After presenting some preliminary material on integral operators which are associated to the Green function of the resolvent of the free Dirac operator, we introduce in Section~\ref{section_self_adjoint} the operator $A_{\eta, \tau}$ in a mathematically rigorous way via the coupling condition~\eqref{equation_jump_condition_intro}. Then we show for noncritical interaction strengths $\eta^2 - \tau^2 \neq 4 c^2$ the self-adjointness of $A_{\eta, \tau}$ in Theorem~\ref{theorem_self_adjoint}. In the proof of the self-adjointness  we also verify a Birman-Schwinger principle, which translates the eigenvalue problem for the differential operator $A_{\eta, \tau}$ to a nonlinear eigenvalue problem for a family of integral operators acting on $\Sigma$.

In Section~\ref{section_spectral_properties} we provide the basic spectral properties of $A_{\eta, \tau}$ for noncritical interaction strengths. We compute the essential spectrum, show that at most finitely many discrete eigenvalues appear and obtain several symmetry relations for the spectrum of $A_{\eta, \tau}$. We complement the results for noncritical interactions by a theorem from \cite{BH17} which shows that the spectral properties of $A_{\eta, \tau}$ can be completely different in the critical case. 

Finally, we compute in Section~\ref{section_nonrelativistic_limit_singular_interaction} for purely electrostatic and purely scalar potentials the nonrelativistic limit of $A_{\eta, \tau}$, which shows that $A_{\eta, \tau}$ is the relativistic counterpart of the Schr\"odinger operator $-\frac{1}{2m} \Delta + \eta \delta_\Sigma$ and which gives another justification that the jump condition~\eqref{equation_jump_condition_intro} models the $\delta$-potential correctly.

\subsection*{Acknowledgement}
The authors acknowledge financial support under the Czech-Austrian grant 7AMB17AT022 and CZ 02/2017. 
PE and VL are supported by the Czech Science Foundation (GA\v{C}R), Grant No. 17-01706S. PE also
acknowledges the support by the European Union within the project CZ.02.1.01/0.0/0.0/16 019/0000778.

\section{The free Dirac operator and associated integral operators} \label{section_free_Operator}

In this preliminary section we collect some well known facts about the free Dirac operator in $\mathbb{R}^3$ and some associated integral operators that are needed to investigate Dirac operators with singular $\delta$-shell interactions. For that we have to fix some notations first. 

\subsection{Notations}

Let $\mathcal{H}$ be a Hilbert space. Then we write $\mathcal{H}^d := \mathcal{H} \otimes \mathbb{C}^d$. For a closable operator $A$ in $\mathcal{H}$ its domain of definition, its range, and its kernel are denoted by $\dom A$, $\ran A$, and $\ker T$, respectively. The closure of $A$ is $\overline{A}$. Eventually, if $A$ is self-adjoint, then its resolvent set, its spectrum, the point, discrete, and essential spectrum are $\rho(A)$, $\sigma(A)$, $\sigma_\text{p}(A)$, $\sigma_\text{disc}(A)$, and $\sigma_\text{ess}(A)$, respectively.

For a domain $\Omega \subset \mathbb{R}^3$ with a compact $C^2$-smooth boundary $\Sigma := \partial \Omega$ we denote by $L^2(\Omega)$ the standard $L^2$-spaces and $L^2(\Sigma)$ is endowed with the inner product based on the integral with respect to the surface measure $\sigma$. As usual, $H^1(\Omega)$ stands for the Sobolev space of order one which consists of functions $f \in L^2(\Omega)$ with $\nabla f \in L^2(\Omega)^3$, where $\nabla f$ is the distributional gradient of~$f$. Similarly $H^1(\mathbb{R}^3)$ is introduced. Moreover, we define the trace space 
\begin{equation*}
  H^{1/2}(\Sigma) := \{ f |_\Sigma: f \in H^1(\Omega) \}
\end{equation*}
equipped with the norm $\| \varphi \|_{1/2} := \inf\{ \| f \|_{H^1(\Omega)}: f \in H^1(\Omega),~ f|_\Sigma = \varphi \}$. One verifies that the trace mapping 
\begin{equation} \label{trace_theorem}
  H^1(\Omega) \ni f \mapsto f|_\Sigma \in H^{1/2}(\Sigma)
\end{equation}
is a bounded, surjective linear map and one can further show that $H^{1/2}(\Sigma) \subset L^2(\Sigma)$, cf. \cite[Section~4.2 and Theorem~4.2.1]{HW08}.

%Next, choose units such that $\hbar = 1$ and denote the mass of the particle and the speed of light by the positive constants $m$ and $c$, respectively. 
Since we are not
interested in the semiclassical limit, we choose units in~\eqref{def_A_eta_formal} in
such a way that $\hbar=1$. However, we keep the mass of the particle $m$
and the speed of light $c$ both as positive constants. The Dirac matrices $\alpha := (\alpha_1, \alpha_2, \alpha_3)$ and $\beta$ are defined for $j \in \{ 1,2,3\}$ by
\begin{equation} \label{def_Dirac_matrices}
  \alpha_j := \begin{pmatrix} 0 & \sigma_j \\ \sigma_j & 0 \end{pmatrix}
  \quad \text{and} \quad \beta := \begin{pmatrix} I_2 & 0 \\ 0 & -I_2 \end{pmatrix},
\end{equation}
where $I_d$ denotes the $d\times d$-identity matrix and $\sigma_1, \sigma_2, \sigma_3$ are the Pauli spin matrices 
\begin{equation*}% \label{def_Pauli_matrices}
  \sigma_1 := \begin{pmatrix} 0 & 1 \\ 1 & 0 \end{pmatrix}, \qquad
  \sigma_2 := \begin{pmatrix} 0 & -i \\ i & 0 \end{pmatrix}, \qquad
  \sigma_3 := \begin{pmatrix} 1 & 0 \\ 0 & -1 \end{pmatrix}.
\end{equation*}
It is easy to see that the Dirac matrices satisfy
\begin{equation} \label{equation_anti_commutation}
  \alpha_j \alpha_k + \alpha_k \alpha_j = 2 \delta_{j k} I_4 \quad \text{and} \quad \alpha_j \beta + \beta \alpha_j = 0, \qquad j, k \in \{ 1, 2, 3\}.
\end{equation}
For $x = (x_1, x_2, x_3) \in \mathbb{R}^3$ we will often employ the notations
\begin{equation*}
  \alpha \cdot x = \sum_{k=1}^3 \alpha_k x_k \quad \text{and} \quad \alpha \cdot \nabla = \sum_{k=1}^3 \alpha_k \partial_k.
\end{equation*}

Finally, if not stated differently, $\Omega_+ \subset \mathbb{R}^3$ is always a bounded domain with compact $C^2$-smooth boundary $\Sigma$, $\Omega_- = \mathbb{R}^3 \setminus \overline{\Omega_+}$, and $\nu$ denotes the unit normal vector field at $\Sigma$ pointing outwards $\Omega_+$. We will often write $f_\pm := f \upharpoonright \Omega_\pm$ for $f \in L^2(\mathbb{R}^3)$.

\subsection{The free Dirac operator}

We are now prepared to introduce the free Dirac operator, which acts in the Hilbert space $L^2(\mathbb{R}^3)^4$ as
\begin{equation} \label{def_free_Dirac}
  %\begin{split}
    A_0 f := -i c \sum_{j=1}^3 \alpha_j \partial_j f + m c^2 \beta f, 
    \qquad
   \dom A_0 := H^1(\mathbb{R}^3)^4.
%  \end{split}
\end{equation}
Using the Foldy-Wouthuysen transformation it is easy to see that $A_0$ is self-adjoint and that
\begin{equation*}% \label{spectrum_A_0}
  \sigma(A_0) = \sigma_{\textup{ess}}(A_0) = (-\infty, -m c^2] \cup [m c^2, \infty),
\end{equation*}
cf. \cite[Section~1.4]{T92}.
Next, for $\lambda \in \rho(A_0) = \mathbb{C} \setminus \big( (-\infty, -m c^2] \cup [m c^2, \infty) \big)$
the resolvent of $A_0$ is
\begin{equation*}
  (A_0 - \lambda)^{-1} f(x) = \int_{\mathbb{R}^3} G_\lambda(x-y) f(y) \textup{d} y, \quad f \in L^2(\mathbb{R}^3)^4,~x \in \mathbb{R}^3,
\end{equation*}
where the Green function $G_\lambda$ is given for $x\neq0$ by
\begin{equation*} %\label{def_G_lambda}
 \begin{split}
   G_\lambda(x) = \left( \frac{\lambda}{c^2} I_4 + m \beta 
                   + \left( 1 - i \sqrt{\frac{\lambda^2}{c^2} - (m c)^2} |x| \right) \frac{i}{c |x|^2} \alpha \cdot x \right)
                   \cdot \frac{e^{i \sqrt{\lambda^2/c^2 - (m c)^2} |x|}}{4 \pi |x|}&,
 \end{split}
\end{equation*}
see \cite[Section~1.E]{T92};
in the last formula the convention $\Im \sqrt{\lambda^2/c^2 - (m c)^2} > 0$ is used.

\subsection{Auxiliary integral operators}

In this subsection we introduce several families of integral operators which are related to the Green function~$G_\lambda$ and which will play a crucial role later in the study of Dirac operators with singular $\delta$-shell interactions.
For a fixed~$\lambda \in \rho(A_0) = \mathbb{C} \setminus \big( (-\infty, -m c^2] \cup [m c^2, \infty) \big)$
we define the potential operator $\Phi_\lambda: L^2(\Sigma)^4 \rightarrow L^2(\mathbb{R}^3)^4$ by
\begin{equation} \label{def_Phi_lambda}
  \Phi_\lambda \varphi(x) := \int_\Sigma G_\lambda(x-y) \varphi(y) \textup{d}\sigma(y), 
  \quad \varphi \in L^2(\Sigma)^4,~x \in \mathbb{R}^3,
\end{equation}
and the strongly singular boundary integral operator $\mathcal{C}_\lambda: L^2(\Sigma)^4 \rightarrow L^2(\Sigma)^4$ acting as
\begin{equation} \label{def_C_lambda}
  \mathcal{C}_\lambda \varphi(x) := \lim_{\varepsilon \searrow 0} \int_{\Sigma \setminus B(x, \varepsilon)} 
  G_\lambda(x-y) \varphi(y) \textup{d}\sigma(y), \quad
  \varphi \in L^2(\Sigma)^4,~x \in \Sigma,
\end{equation}
where $B(x,\varepsilon)$ is the ball of radius $\varepsilon$ centered at $x$.
Both operators $\Phi_\lambda$ and $\mathcal{C}_\lambda$ are well defined and bounded, 
see~\cite[Proposition~3.4]{BEHL17} or \cite[Section~2]{AMV15}, 
and $\Phi_\lambda$ is injective by~\cite[Proposition~3.4 and Definition~2.3]{BEHL17}. In particular, $\mathcal{C}_\lambda$ is uniformly bounded for $\lambda \in (-m c^2, m c^2)$, i.e. there exists a constant $K > 0$ independent of $\lambda$ such that
\begin{equation} \label{C_lambda_uniformly_bounded}
  \| \mathcal{C}_\lambda \| \leq K \qquad \text{ for all } \lambda \in (-m c^2,m c^2),
\end{equation}
cf. \cite[Proposition~3.5]{BEHL17} and also \cite[Lemma~3.2]{AMV15}. 
Next, if $\varphi \in H^{1/2}(\Sigma)^4$, then according to~\cite[Proposition~4.2]{BH17} 
\begin{equation} \label{equation_Phi_smooth}
  \Phi_\lambda \varphi \in H^1(\Omega_+)^4 \oplus H^1(\Omega_-)^4 \quad \text{and} \quad 
  \mathcal{C}_\lambda \varphi \in H^{1/2}(\Sigma)^4
\end{equation}
hold. Moreover, if  $\lambda \in \rho(A_0)$, then a function $f_\lambda \in H^1(\Omega_+)^4 \oplus H^1(\Omega_-)^4$ satisfies 
\begin{equation*}
  (-i c \alpha \cdot \nabla + m c^2 \beta - \lambda) f_\lambda = 0 \text{ in } \Omega_\pm,
\end{equation*}
if and only if there exists a density $\varphi \in H^{1/2}(\Sigma)^4$ such that 
\begin{equation} \label{equation_kernel}
  f_\lambda = \Phi_\lambda \varphi;
\end{equation}
see~\cite[Proposition~4.2]{BH17}. 

Now, we describe how $\Phi_\lambda$ and $\mathcal{C}_\lambda$ are related to each other by taking traces. 
Let $\varphi \in H^{1/2}(\Sigma)^4$ and $\lambda \in \rho(A_0)$. Then, the trace of the function
\begin{equation*}
  \Phi_\lambda \varphi = (\Phi_\lambda \varphi)_+ \oplus (\Phi_\lambda \varphi)_-  
  \in H^1(\Omega_+)^4 \oplus H^1(\Omega_-)^4
\end{equation*}
on $\Sigma$ is 
\begin{equation*}
 \big((\Phi_\lambda \varphi)_\pm\big) \big|_\Sigma = \mathcal{C}_\lambda \varphi \mp \dfrac{i}{2 c} (\alpha \cdot \nu) \varphi;
\end{equation*}
this is shown in~\cite[Lemma~2.2]{AMV15} for $\lambda \in (-m c^2, m c^2)$, the case $\lambda \in \mathbb{C} \setminus \mathbb{R}$
can be proved in the same way. In particular, using $(\alpha \cdot \nu)^2 = I_4$ one finds that the identities
\begin{gather} \label{jump1}
  \frac{1}{2} \big( (\Phi_\lambda \varphi)_+|_\Sigma + (\Phi_\lambda \varphi)_-|_\Sigma \big) = \mathcal{C}_\lambda \varphi,\\
 \label{jump2}
  i c \alpha \cdot \nu \big( (\Phi_\lambda \varphi)_+|_\Sigma - (\Phi_\lambda \varphi)_-|_\Sigma \big) = \varphi,
\end{gather}
hold. Finally, let us mention the mapping properties of the operators $\mathcal{C}_\lambda^2 - \frac{1}{4 c^2} I_4$ and $\mathcal{C}_\lambda \beta + \beta \mathcal{C}_\lambda$ which will be important for the analysis of $A_{\eta, \tau}$. Using the anti-commutation relation~\eqref{equation_anti_commutation} it is easy to see for $\varphi \in L^2(\Sigma)^4$ that 
\begin{equation*}
  (\beta \mathcal{C}_\lambda + \mathcal{C}_\lambda \beta) \varphi(x) =  2 \left(\frac{\lambda}{c^2} \beta + m I_4 \right)
                   \cdot \int_{\Sigma} \frac{e^{i \sqrt{\lambda^2/c^2 - (m c)^2} |x-y|}}{4 \pi |x-y|} \varphi(y) \text{d} \sigma(y),
\end{equation*}
i.e. $\beta \mathcal{C}_\lambda + \mathcal{C}_\lambda \beta$ is a constant matrix times the single-layer boundary integral operator associated to $-\Delta + (m c)^2 - \frac{\lambda^2}{c^2}$, cf. \cite[equation~(9.15)]{M00}. This together with \cite[Theorem~6.11]{M00}, the fact that $H^{1/2}(\Sigma)^4$ is compactly embedded in $L^2(\Sigma)^4$, see, e.g., \cite[Theorem~4.2.2]{HW08}, and \cite[Proposition~4.4~(iii)]{BH17}, see also~\cite[Proposition~2.8]{OV17}, yields the following proposition:

\begin{prop} \label{proposition_mapping_properties}
  Let $\lambda \in \rho(A_0) = \mathbb{C} \setminus \big( (-\infty, - m c^2] \cup [m c^2, \infty) \big)$. Then the following holds.
  \begin{itemize}
    \item[\textup{(i)}] The operator $\mathcal{C}_\lambda^2 - \frac{1}{4 c^2} I_4$ gives rise to a bounded operator
    \begin{equation*}
      \mathcal{C}_\lambda^2 - \frac{1}{4 c^2} I_4: L^2(\Sigma)^4 \rightarrow H^{1/2}(\Sigma)^4.
    \end{equation*}
    In particular, $\mathcal{C}_\lambda^2 - \frac{1}{4 c^2} I_4$ is compact in $L^2(\Sigma)^4$.
    \item[\textup{(ii)}] The operator $\beta \mathcal{C}_\lambda + \mathcal{C}_\lambda \beta$ gives rise to a bounded operator
    \begin{equation*}
      \beta \mathcal{C}_\lambda + \mathcal{C}_\lambda \beta: L^2(\Sigma)^4 \rightarrow H^{1/2}(\Sigma)^4.
    \end{equation*}
    In particular, $\beta \mathcal{C}_\lambda + \mathcal{C}_\lambda \beta$ is compact in $L^2(\Sigma)^4$.
  \end{itemize}
\end{prop}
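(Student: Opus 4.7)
For part~(ii), the displayed computation immediately before the proposition has already reduced the task: the anticommutation $\alpha_j\beta+\beta\alpha_j=0$ from~\eqref{equation_anti_commutation} cancels all gradient contributions from $G_\lambda$ and identifies $\beta\mathcal{C}_\lambda+\mathcal{C}_\lambda\beta$ with the constant matrix $2\bigl(\tfrac{\lambda}{c^2}\beta+mI_4\bigr)$ times the scalar single-layer boundary integral operator $S_k$ associated to $-\Delta+(mc)^2-\lambda^2/c^2$, with $k=\sqrt{\lambda^2/c^2-(mc)^2}$ and $\Im k>0$. Being an order $-1$ pseudodifferential operator on the compact $C^2$-surface $\Sigma$, $S_k$ maps $L^2(\Sigma)$ boundedly into $H^1(\Sigma)\subset H^{1/2}(\Sigma)$; this is precisely Theorem~6.11 in~\cite{M00}. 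Tensoring with the constant $4\times 4$ matrix preserves boundedness, and compactness in $L^2(\Sigma)^4$ is immediate from the compact Sobolev embedding $H^{1/2}(\Sigma)^4\hookrightarrow L^2(\Sigma)^4$.

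For part~(i), I would split the Green function into its Cauchy--Dirac principal part and a smoother remainder, $G_\lambda(x)=\tfrac{i\alpha\cdot x}{4\pi c|x|^3}+R_\lambda(x)$, with $R_\lambda$ gathering the $(\lambda/c^2\,I_4+m\beta)$-contribution and the analytic corrections carrying at most a $|x|^{-1}$ singularity. Correspondingly, write $\mathcal{C}_\lambda=\mathcal{C}^{\mathrm{pp}}+\mathcal{R}_\lambda$, where $\mathcal{R}_\lambda$ is of scalar single-layer type (order $-1$, hence bounded $L^2\to H^1$) and $\mathcal{C}^{\mathrm{pp}}$ is a zeroth-order Calder\'on--Zygmund operator on $\Sigma$. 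Exploiting $(\alpha\cdot(x-y))^2=|x-y|^2 I_4$, the composition $(\mathcal{C}^{\mathrm{pp}})^2$ can be analyzed by localization and reduction to the tangent plane of $\Sigma$, where a Fourier/Plemelj--Sokhotski computation yields exactly $\tfrac{1}{4c^2}I_4$. The deviation induced by the $C^2$-curvature of $\Sigma$ produces an integral operator of order $-1$, while the cross-terms $\mathcal{C}^{\mathrm{pp}}\mathcal{R}_\lambda+\mathcal{R}_\lambda\mathcal{C}^{\mathrm{pp}}$ together with $\mathcal{R}_\lambda^2$ compose factors of orders $0$ and $-1$; all these pieces map $L^2(\Sigma)^4\to H^{1/2}(\Sigma)^4$. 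Summing gives the bounded map $\mathcal{C}_\lambda^2-\tfrac{1}{4c^2}I_4\colon L^2(\Sigma)^4\to H^{1/2}(\Sigma)^4$, and compactness on $L^2(\Sigma)^4$ again follows from the Rellich embedding.

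The main obstacle is the principal-part cancellation in~(i): verifying $(\mathcal{C}^{\mathrm{pp}})^2=\tfrac{1}{4c^2}I_4+(\text{smoothing})$ on a curved surface requires careful tracking of the residual kernel produced by representing $\Sigma$ as a $C^2$-graph over its tangent plane, which a priori only exhibits a $|x-y|^{-2}$ singularity. This computation has been carried out in \cite[Proposition~4.4(iii)]{BH17}, see also \cite[Proposition~2.8]{OV17}, so the cleanest route here is to cite those results and combine them with Theorem~6.11 of~\cite{M00} and the Rellich embedding, rather than reproduce the full singular-integral calculation.
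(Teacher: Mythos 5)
Your proposal matches the paper's approach: part (ii) is handled exactly as in the text (the anticommutation collapses $\beta\mathcal{C}_\lambda+\mathcal{C}_\lambda\beta$ to a constant matrix times the scalar single-layer operator, which is bounded $L^2\to H^1\subset H^{1/2}$ by \cite[Theorem~6.11]{M00}, and compactness then follows from the Rellich embedding), and for part (i) you ultimately defer—as the paper does—to \cite[Proposition~4.4(iii)]{BH17} and \cite[Proposition~2.8]{OV17}. Your extra sketch of the Calder\'on--Zygmund decomposition and the tangent-plane Plemelj computation is a reasonable description of what those cited results establish, but it is not a substitute for them, and you correctly recognize that and cite them instead.
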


Finally, we note that the adjoint $\Phi_\lambda^*: L^2(\mathbb{R}^3)^4 \rightarrow L^2(\Sigma)^4$ of $\Phi_\lambda$
acts as 
\begin{equation} \label{equation_Phi_lambda_star}
  \Phi_\lambda^* f = \big( (A_0 - \overline{\lambda})^{-1} f \big)\big|_\Sigma
\end{equation}
or, in a more explicit way,
\begin{equation*}
  \Phi_\lambda^* f(x) = \int_{\mathbb{R}^3} G_{\bar{\lambda}}(x-y) f(y) \textup{d} y, \quad
  f \in L^2(\mathbb{R}^3)^4,~x \in \Sigma.
\end{equation*}
It follows from \eqref{trace_theorem}, \eqref{def_free_Dirac}, and \eqref{equation_Phi_lambda_star} that $\Phi_\lambda^* f \in H^{1/2}(\Sigma)^4$ for any $f \in L^2(\mathbb{R}^3)^4$.

\section{Definition and self-adjointness of $A_{\eta, \tau}$} \label{section_self_adjoint}

This section is devoted to the rigorous mathematical definition of the operator $A_{\eta, \tau}$ and the proof of its self-adjointness. 
In the following we will often make use of the orthogonal decomposition 
$L^2(\mathbb{R}^3)^4 = L^2(\Omega_+)^4 \oplus L^2(\Omega_-)^4$
and we write for $f \in L^2(\mathbb{R}^3)^4$, in this sense, $f = f_+ \oplus f_-$ with 
$f_\pm := f \upharpoonright \Omega_\pm$.

As explained in the introduction, see~\eqref{equation_jump_condition_intro}, the $\delta$-shell interaction is modeled by a coupling condition which has to be satisfied by functions in the operator domain. 
% This coupling condition will be formulated in terms of the trace mappings $\Gamma_0, \Gamma_1: H^1(\Omega_+)^4 \oplus H^1(\Omega_-)^4 \rightarrow L^2(\Sigma)^4$ acting as 
% \begin{equation} \label{def_Gamma_0_Gamma_1}
%   \Gamma_0 f := i c (\alpha \cdot \nu) (f_+|_\Sigma - f_-|_\Sigma) \quad \text{and} \quad
%   \Gamma_1 f := \frac{1}{2} ( f_+|_\Sigma + f_-|_\Sigma) 
% \end{equation}
% for $f \in H^1(\Omega_+)^4 \oplus H^1(\Omega_-)^4$. Then we
We define for $\eta, \tau \in \mathbb{R}$ the operator $A_{\eta, \tau}$ by
\begin{equation} \label{def_A_eta}
    \begin{split}
      A_{\eta, \tau} f 
        &:= (-i c \alpha \cdot \nabla + m c^2 \beta) f_+ \oplus (-i c \alpha \cdot \nabla + m c^2 \beta) f_-, \\
      \dom A_{\eta, \tau} 
          &:= \big\{ f = f_+ \oplus f_- \in H^1(\Omega_+)^4 \oplus H^1(\Omega_-)^4: \\
          & \qquad \qquad i c \alpha \cdot \nu (f_+|_\Sigma - f_-|_\Sigma) + \tfrac{1}{2}(\eta I_4 + \tau \beta) ( f_+|_\Sigma + f_-|_\Sigma) = 0 \big\}.
    \end{split}
  \end{equation}

In the following lemma we discuss some alternative representations of the coupling condition which models the $\delta$-shell interaction:

\begin{lem} \label{lemma_transmission_condition}
  Let $\eta, \tau \in \mathbb{R}$. Then the following hold.
  \begin{itemize}
    \item[\textup{(i)}] If $\eta^2 - \tau^2 \neq -4 c^2$, then there exists an invertible matrix $R_{\eta, \tau}$ given explicitly in~\eqref{def_R_eta} such that a function $f = f_+ \oplus f_- \in H^1(\Omega_+)^4 \oplus H^1(\Omega_-)^4$ belongs to $\dom A_{\eta, \tau}$ if and only if
    \begin{equation*} 
      f_+|_\Sigma = R_{\eta, \tau} f_-|_\Sigma.
    \end{equation*}
    \item[\textup{(ii)}] If $\eta^2 - \tau^2 = -4 c^2$, then a function $f = f_+ \oplus f_- \in H^1(\Omega_+)^4 \oplus H^1(\Omega_-)^4$ belongs to $\dom A_{\eta, \tau}$ if and only if
    \begin{equation*}
      \big( 2 c I_4 - i(\alpha \cdot \nu) (\eta I_4 + \tau \beta) \big) f_+|_\Sigma = 0, \quad 
      \big( 2 c I_4 + i(\alpha \cdot \nu)  (\eta I_4 + \tau \beta) \big) f_-|_\Sigma = 0.
    \end{equation*}
  \end{itemize}
\end{lem}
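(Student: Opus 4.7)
The plan is to reformulate the coupling condition
\[ ic(\alpha\cdot\nu)(f_+|_\Sigma - f_-|_\Sigma) + \tfrac{1}{2}(\eta I_4 + \tau\beta)(f_+|_\Sigma + f_-|_\Sigma) = 0 \]
as an algebraic relation involving only two $4\times 4$ matrices. Since $(\alpha\cdot\nu)^2 = I_4$, the matrix $-2i(\alpha\cdot\nu)$ is invertible, and multiplying the coupling condition by it on the left yields the equivalent form
\[ P_- f_+|_\Sigma = P_+ f_-|_\Sigma, \qquad P_\pm := 2cI_4 \pm i(\alpha\cdot\nu)(\eta I_4 + \tau\beta). \]
The remainder of the argument reduces to understanding when $P_\pm$ is invertible.

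The key algebraic input is the product $P_+ P_-$, computed via the anti-commutation relations \eqref{equation_anti_commutation}. Since $\beta$ anti-commutes with each $\alpha_j$, one has $(\alpha\cdot\nu)(\eta I_4 + \tau\beta) = (\eta I_4 - \tau\beta)(\alpha\cdot\nu)$, hence
\[ \bigl[i(\alpha\cdot\nu)(\eta I_4 + \tau\beta)\bigr]^2 = -(\eta I_4 - \tau\beta)(\eta I_4 + \tau\beta) = -(\eta^2 - \tau^2)\,I_4, \]
and therefore $P_+ P_- = P_- P_+ = (4c^2 + \eta^2 - \tau^2)\,I_4$. This single identity drives both cases of the lemma.

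For part (i), the noncritical assumption $\eta^2 - \tau^2 \neq -4c^2$ makes this scalar factor nonzero, so $P_\pm$ are both invertible with $P_-^{-1} = (4c^2 + \eta^2 - \tau^2)^{-1} P_+$. The coupling condition is thus equivalent to $f_+|_\Sigma = R_{\eta,\tau} f_-|_\Sigma$ with
\[ R_{\eta,\tau} := P_-^{-1} P_+ = \frac{P_+^2}{4c^2 + \eta^2 - \tau^2}, \]
which is invertible because each $P_\pm$ is.

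For part (ii), the product $P_+ P_-$ vanishes, so invertibility of $P_\pm$ fails and the argument of (i) breaks down; this is the main obstacle, as one can no longer solve for $f_+|_\Sigma$ in terms of $f_-|_\Sigma$. The remedy is a direct expansion showing that in the critical case
\[ P_\pm^2 = 4c^2 I_4 \pm 4c\, i(\alpha\cdot\nu)(\eta I_4 + \tau\beta) - (\eta^2 - \tau^2) I_4 = 4c\, P_\pm. \]
Applying $P_+$ on the left of $P_- f_+|_\Sigma = P_+ f_-|_\Sigma$ now yields $P_+ P_- f_+|_\Sigma = P_+^2 f_-|_\Sigma$, i.e.\ $0 = 4c\, P_+ f_-|_\Sigma$, forcing $P_+ f_-|_\Sigma = 0$ and hence $P_- f_+|_\Sigma = 0$. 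The converse direction is immediate, since $P_- f_+|_\Sigma = 0 = P_+ f_-|_\Sigma$ is exactly the reformulated coupling condition.
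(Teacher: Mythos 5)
Your proof is correct and takes essentially the same route as the paper: both rewrite the coupling condition as a pair of matrix coefficients acting on $f_+|_\Sigma$ and $f_-|_\Sigma$, and both use the anti-commutation relation $(\alpha\cdot\nu)\beta = -\beta(\alpha\cdot\nu)$ together with $(\alpha\cdot\nu)^2 = I_4$ to determine when these coefficients are invertible. Your normalization, multiplying by $-2i(\alpha\cdot\nu)$ first to get $P_-f_+|_\Sigma = P_+f_-|_\Sigma$, is a cosmetic variant of the paper's equation \eqref{equation_transmission_condition1}, and indeed $R_{\eta,\tau} = P_-^{-1}P_+$ coincides with \eqref{def_R_eta}. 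The one organizational improvement is that you derive the single identity $P_+P_- = P_-P_+ = (4c^2+\eta^2-\tau^2)I_4$ once and use it to drive both cases, whereas the paper uses an explicit inverse formula for (i) and a separate multiplier $\pm ic(\alpha\cdot\nu)+\tfrac12(\eta I_4-\tau\beta)$ for (ii); your derivation of $P_\pm^2 = 4cP_\pm$ in the critical case is a clean replacement for that second multiplication. Both proofs are equally elementary.
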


Before we prove Lemma~\ref{lemma_transmission_condition} let us discuss its meaning: if $\eta^2 - \tau^2 \neq -4 c^2$, then item~(i) shows that \eqref{equation_jump_condition_intro} is a coupling condition which relates the values of $f_+$ at $\Sigma$ to those of $f_-$ at $\Sigma$ via the matrix $R_{\eta, \tau}$. 
On the other hand, if $\eta^2 - \tau^2 = -4 c^2$, then assertion~(ii) of the above lemma shows that $A_{\eta, \tau}$ is decoupled to Dirac operators in $\Omega_\pm$ with the above boundary conditions. This implies a confinement meaning that a particle which is initially located in $\Omega_\pm$ will remain in $\Omega_\pm$ in its time evolution. In other words this means that the $\delta$-potential makes $\Sigma$ impenetrable for particles. This is investigated in a more detailed way in \cite[Section~5]{AMV15} and \cite[Section~V]{DES89}. In particular, using the anti-commutation relation \eqref{equation_anti_commutation} we see that the above boundary conditions simplify for $\eta = 0$ and $\tau=2c$ to
\begin{equation*}
  \big( I_4 + i \beta (\alpha \cdot \nu) \big) f_+|_\Sigma = 0, \quad 
  \big( I_4 - i \beta (\alpha \cdot \nu) \big) f_-|_\Sigma = 0,
\end{equation*}
which are the boundary conditions characterizing the MIT bag model of quarks confined in a nucleon mentioned in the introduction \cite{C75, CJJT74, CJJTW74, DJJK75, J75} (note that the normal $\nu$ is pointing inside $\Omega_-$). In this way, $A_{0, 2c}$ decomposes into the orthogonal sum of an MIT bag operator in $\Omega_+$ and a Dirac operator in the ``exterior bag'' $\Omega_-$ with similar boundary conditions. We remark that from the physical point of view
only the problem on a bounded domain is a model for the quark confinement, while its direct counterpart on an exterior unbounded domain is merely a mathematical object.

\begin{proof}[Proof of Lemma~\ref{lemma_transmission_condition}]
Another way to write the coupling condition~\eqref{equation_jump_condition_intro} is
%   $\big(\Gamma_0 + (\eta I_4 + \tau \beta) \Gamma_1 \big) f = 0$ is
\begin{equation} \label{equation_transmission_condition1}
  \left( i c (\alpha \cdot \nu) + \frac{1}{2} (\eta I_4 + \tau \beta) \right) f_+|_\Sigma
  + \left( -i c (\alpha \cdot \nu) + \frac{1}{2} (\eta I_4 + \tau \beta) \right) f_-|_\Sigma = 0.
\end{equation}
If $\eta^2 - \tau^2 \neq -4 c^2$, then the matrix
$i c (\alpha \cdot \nu) + \frac{1}{2} (\eta I_4 + \tau \beta)$ is invertible with 
\begin{equation*}
  \left(i c (\alpha \cdot \nu) + \frac{1}{2} (\eta I_4 + \tau \beta)\right)^{-1} = \frac{4}{4 c^2 + \eta^2 - \tau^2} \left(- i c (\alpha \cdot \nu) + \frac{1}{2} (\eta I_4 - \tau \beta)\right).
\end{equation*}
Hence, if we set 
\begin{equation} \label{def_R_eta}
  R_{\eta, \tau} := -\left( i c (\alpha \cdot \nu) + \frac{1}{2} (\eta I_4 + \tau \beta) \right)^{-1}
      \left( - i c (\alpha \cdot \nu) + \frac{1}{2} (\eta I_4 + \tau \beta) \right),
\end{equation}
then we deduce immediately the result of item~(i). To show assertion~(ii) one just has to multiply~\eqref{equation_transmission_condition1} by the matrices $\pm i c (\alpha \cdot \nu) + \frac{1}{2} (\eta I_4 - \tau \beta)$. Using~\eqref{equation_anti_commutation} and $\eta^2 - \tau^2 = -4 c^2$ one finds that these equations simplify to the claimed boundary conditions.
\end{proof}

Using integration by parts and the coupling condition~\eqref{equation_jump_condition_intro} we show first that $A_{\eta, \tau}$ is symmetric:

\begin{lem} \label{lemma_symmetric}
  Let $\eta, \tau \in \mathbb{R}$. Then the operator $A_{\eta, \tau}$ defined by~\eqref{def_A_eta}  is symmetric.
\end{lem}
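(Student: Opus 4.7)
The plan is to fix arbitrary $f, g \in \dom A_{\eta, \tau}$ and verify the symmetry identity $\langle A_{\eta, \tau} f, g\rangle_{L^2(\mathbb{R}^3)^4} = \langle f, A_{\eta, \tau} g\rangle_{L^2(\mathbb{R}^3)^4}$ by reducing the difference to a surface integral on $\Sigma$ and then using the coupling condition encoded in~\eqref{def_A_eta} to show that this surface integral vanishes.

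First, since the Dirac matrix $\beta$ is Hermitian, the two contributions coming from the mass term $m c^2 \beta$ cancel, and only the first-order part survives:
\begin{equation*}
\langle A_{\eta, \tau} f, g\rangle - \langle f, A_{\eta, \tau} g\rangle
= -i c \int_{\Omega_+ \cup \Omega_-} \bigl[ (\alpha \cdot \nabla f) \cdot \overline{g} + f \cdot \overline{\alpha \cdot \nabla g} \bigr] \, dx .
\end{equation*}
Because $f_\pm, g_\pm \in H^1(\Omega_\pm)^4$ and $\Sigma$ is $C^2$-smooth, I would apply Green's identity on each of $\Omega_+$ and $\Omega_-$ separately. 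Using that every $\alpha_j$ is Hermitian and that the outward unit normal to $\partial \Omega_-$ along $\Sigma$ equals $-\nu$, the bulk contributions combine pairwise and cancel, leaving
\begin{equation*}
\langle A_{\eta, \tau} f, g\rangle - \langle f, A_{\eta, \tau} g\rangle
= -i c \int_\Sigma \bigl[ f_+|_\Sigma \cdot \overline{(\alpha \cdot \nu)\, g_+|_\Sigma} - f_-|_\Sigma \cdot \overline{(\alpha \cdot \nu)\, g_-|_\Sigma} \bigr] \, d\sigma .
\end{equation*}

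The remaining task is to show that this boundary integrand vanishes pointwise. To that end, for any $h \in \dom A_{\eta, \tau}$ I would introduce the symmetric and antisymmetric parts $s_h := \tfrac{1}{2}(h_+|_\Sigma + h_-|_\Sigma)$ and $d_h := h_+|_\Sigma - h_-|_\Sigma$, so that $h_\pm|_\Sigma = s_h \pm \tfrac{1}{2} d_h$ and the coupling condition in~\eqref{def_A_eta} rearranges to $(\alpha \cdot \nu) d_h = \tfrac{i}{c}(\eta I_4 + \tau \beta) s_h$. The polarization-type identity $(A+B)\cdot\overline{C+D} - (A-B)\cdot\overline{C-D} = 2(A\cdot\overline{D} + B\cdot\overline{C})$ then rewrites the boundary integrand as $s_f \cdot \overline{(\alpha \cdot \nu) d_g} + d_f \cdot \overline{(\alpha \cdot \nu) s_g}$. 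Substituting the coupling relation for both $(\alpha \cdot \nu) d_g$ and $(\alpha \cdot \nu) d_f$, and using Hermiticity of $\alpha \cdot \nu$ together with Hermiticity and realness of $\eta I_4 + \tau \beta$ (so that complex conjugation merely flips the sign of the prefactor $i/c$), the two summands become equal in absolute value with opposite signs and cancel.

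I do not expect any real conceptual obstacle: the argument is a piecewise $H^1$ Green's identity combined with a short algebraic cancellation enforced by~\eqref{equation_jump_condition_intro}. The only items requiring some care are the bookkeeping of complex conjugates (especially $\overline{i/c} = -i/c$) and the reversal of orientation when viewing $\Sigma$ as $\partial\Omega_-$ rather than $\partial\Omega_+$.
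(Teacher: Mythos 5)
Your proof is correct and follows essentially the same route as the paper: piecewise Green's identity in $\Omega_\pm$ (exploiting Hermiticity of the $\alpha_j$ and the sign flip of the normal on $\partial\Omega_-$), followed by a polarization identity rewriting the surface integrand in terms of the sum $s_h$ and jump $d_h$ of the traces, and finally the coupling condition together with Hermiticity of $\alpha\cdot\nu$ and of $\eta I_4 + \tau\beta$ to make the two remaining terms cancel. The paper's proof phrases the polarization step directly as the identity $(-ic\alpha\cdot\nu f_+,g_+)-(-ic\alpha\cdot\nu f_-,g_-)=\tfrac12(-ic\alpha\cdot\nu(f_+-f_-),g_++g_-)-\tfrac12(f_++f_-,-ic\alpha\cdot\nu(g_+-g_-))$, which is the same algebraic manipulation you perform with $s_h$ and $d_h$; your bookkeeping of the conjugation (the sign of $i/c$ and moving $\alpha\cdot\nu$ across the inner product) is the same point the paper handles implicitly by keeping things in inner-product notation.
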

\begin{proof}
  Let $f, g  \in \dom A_{\eta, \tau}$. Employing integration by parts in $\Omega_\pm$ we get first
  \begin{equation*}
    \begin{split}
      (&A_{\eta, \tau} f, g )_{L^2(\mathbb{R}^3)^4}
              - (f, A_{\eta, \tau} g)_{L^2(\mathbb{R}^3)^4} \\
      &= (-i c \alpha \cdot \nu f_+, g_+ )_{L^2(\Sigma)^4}   -( -i c \alpha \cdot \nu f_-, g_- )_{L^2(\Sigma)^4} \\
      &= \frac{1}{2} \big( -i c \alpha \cdot \nu (f_+ - f_-), g_+ + g_- \big)_{L^2(\Sigma)^4}
      - \frac{1}{2} \big( f_+ + f_-, -i c \alpha \cdot \nu (g_+ - g_-)\big)_{L^2(\Sigma)^4}.
    \end{split}
  \end{equation*}
  Using the coupling condition~\eqref{equation_jump_condition_intro} for $f$ and $g$, we conclude that the last term is
  \begin{equation*}
    \begin{split}
      \frac{1}{2} \big( -i c \alpha \cdot \nu (f_+ &- f_-), g_+ + g_- \big)_{L^2(\Sigma)^4} 
          - \frac{1}{2} \big(f_+ + f_-, -i c \alpha \cdot \nu (g_+ - g_-)\big)_{L^2(\Sigma)^4} \\
      &= \left( \frac{1}{4} (\eta I_4 + \tau \beta) (f_+ + f_-), g_+ + g_- \right)_{L^2(\Sigma)^4} \\
      &\qquad \qquad-  \left( f_+ + f_-, \frac{1}{4} (\eta I_4 + \tau \beta) (g_+ + g_-) \right)_{L^2(\Sigma)^4} = 0.
    \end{split}
  \end{equation*}
  Since this is true for any
  $f, g \in \dom A_{\eta, \tau}$, the operator $A_{\eta, \tau}$ is indeed symmetric.
\end{proof}

Next, we prove a Birman-Schwinger principle for the operator $A_{\eta, \tau}$. This relates the linear eigenvalue problem for the differential operator $A_{\eta, \tau}$ to the nonlinear eigenvalue problem for a family of bounded integral operators involving the maps $\mathcal{C}_\lambda$ introduced in~\eqref{def_C_lambda}, which yields also a reduction of the space dimension for the eigenvalue problem. We would like to note that this lemma can only be shown in this simple form for noncritical interaction strengths, i.e. for $\eta^2 - \tau^2 \neq 4 c^2$.
The result stated below follows from the general consideration in \cite[Theorem~2.4]{BEHL17} 
or~\cite[Proposition~3.1]{AMV15}, but to keep the paper self-contained, we add the short simple proof here.

\begin{lem} \label{lemma_Birman_Schwinger}
  Let $\eta, \tau \in \mathbb{R}$ such that $\eta^2 - \tau^2 \neq 4 c^2$ and let the operator $A_{\eta, \tau}$
  be defined by~\eqref{def_A_eta}. 
  \begin{itemize}
    \item[$\textup{(i)}$] If for $\lambda \in \rho(A_0)$ and $\varphi \in L^2(\Sigma)^4$ one has $\big(I_4 + (\eta I_4 + \tau \beta) \mathcal{C}_\lambda \big) \varphi \in H^{1/2}(\Sigma)^4$, then it follows $\varphi \in H^{1/2}(\Sigma)^4$.
    \item[$\textup{(ii)}$] $\lambda \in \rho(A_0) \cap \sigma_{\textup{p}}(A_{\eta, \tau})$ 
    if and only if $-1 \in \sigma_\textup{p}\big((\eta I_4 + \tau \beta) \mathcal{C}_\lambda \big)$.
    \item[$\textup{(iii)}$] For $\lambda \in \mathbb{C} \setminus \mathbb{R}$ the inverse
    \begin{equation*}
      \big( I_4 + (\eta I_4 + \tau \beta) \mathcal{C}_\lambda \big)^{-1}: L^2(\Sigma)^4 \rightarrow L^2(\Sigma)^4
    \end{equation*}
    exists and is bounded and everywhere defined.
  \end{itemize}
\end{lem}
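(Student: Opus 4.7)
Write $M := \eta I_4 + \tau \beta$ and $T_\pm := I_4 \pm M\mathcal{C}_\lambda$. All three parts rest on a single algebraic identity, which I would derive first. Since $M$ commutes with $\beta$ and $\beta^2 = I_4$, a direct computation gives
\begin{equation*}
M\mathcal{C}_\lambda M = (\eta^2 - \tau^2)\mathcal{C}_\lambda + (\eta\tau I_4 + \tau^2 \beta)(\beta\mathcal{C}_\lambda + \mathcal{C}_\lambda \beta),
\end{equation*}
so multiplying by $\mathcal{C}_\lambda$ on the right and invoking Proposition~\ref{proposition_mapping_properties} (which makes $\mathcal{C}_\lambda^2 - \tfrac{1}{4c^2}I_4$ and $\beta\mathcal{C}_\lambda + \mathcal{C}_\lambda \beta$ bounded from $L^2(\Sigma)^4$ into $H^{1/2}(\Sigma)^4$) yields
\begin{equation*}
T_- T_+ = T_+ T_- = I_4 - (M\mathcal{C}_\lambda)^2 = \frac{4c^2 - (\eta^2 - \tau^2)}{4c^2}\, I_4 - K,
\end{equation*}
where $K : L^2(\Sigma)^4 \to H^{1/2}(\Sigma)^4$ is bounded and the scalar coefficient is nonzero by the noncritical hypothesis $\eta^2 - \tau^2 \neq 4c^2$.

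For (i), if $T_+\varphi \in H^{1/2}(\Sigma)^4$, then $\mathcal{C}_\lambda$ preserves $H^{1/2}(\Sigma)^4$ by~\eqref{equation_Phi_smooth}, so $T_-(T_+\varphi) \in H^{1/2}(\Sigma)^4$; combining this with the identity above produces $\varphi \in H^{1/2}(\Sigma)^4$. For the forward direction of (ii), any eigenfunction $f$ of $A_{\eta,\tau}$ at $\lambda \in \rho(A_0)$ solves the free Dirac equation on $\Omega_\pm$, hence by~\eqref{equation_kernel} there exists $\varphi \in H^{1/2}(\Sigma)^4$ with $f = \Phi_\lambda\varphi$, and $\varphi \neq 0$ by injectivity of $\Phi_\lambda$; substituting the trace formulas~\eqref{jump1}--\eqref{jump2} into the jump condition from~\eqref{def_A_eta} collapses it to $T_+ \varphi = 0$. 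Conversely, given $\varphi \in L^2(\Sigma)^4\setminus\{0\}$ with $T_+\varphi = 0$, part~(i) promotes $\varphi$ to $H^{1/2}(\Sigma)^4$; then $f := \Phi_\lambda\varphi$ lies in $H^1(\Omega_+)^4 \oplus H^1(\Omega_-)^4$ by~\eqref{equation_Phi_smooth}, solves $(A_0-\lambda)f = 0$ on $\Omega_\pm$, and satisfies the jump condition again by~\eqref{jump1}--\eqref{jump2}, so $\lambda \in \sigma_{\textup{p}}(A_{\eta,\tau})$.

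For (iii), fix $\lambda \in \mathbb{C}\setminus\mathbb{R}$. Both $A_{\eta,\tau}$ and $A_{-\eta,-\tau}$ are symmetric by Lemma~\ref{lemma_symmetric}, so each has only real eigenvalues; since $(-\eta)^2 - (-\tau)^2 = \eta^2 - \tau^2 \neq 4c^2$, part~(ii) applies to both parameter pairs and shows that $T_+$ and $T_-$ are injective on $L^2(\Sigma)^4$. The compact embedding $H^{1/2}(\Sigma)^4 \hookrightarrow L^2(\Sigma)^4$ makes $K$ compact on $L^2(\Sigma)^4$, so $T_-T_+$ is a nonzero scalar multiple of $I_4$ plus a compact operator and is injective as a product of injectives; the Fredholm alternative then forces its invertibility. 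Setting $S := (T_-T_+)^{-1}$ and using $T_-T_+ = T_+T_-$, the identities $T_+(T_-S) = (T_+T_-)S = I$ and $(S T_-) T_+ = S (T_- T_+) = I$ exhibit $T_-S$ and $S T_-$ as bounded right and left inverses of $T_+$, so $T_+$ is boundedly invertible on $L^2(\Sigma)^4$.

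The main obstacle is the algebraic identity for $(M\mathcal{C}_\lambda)^2$: it is precisely what forces the noncritical hypothesis $\eta^2 - \tau^2 \neq 4c^2$ and simultaneously delivers the smoothing needed in~(i) and the Fredholm structure needed in~(iii); the rest is standard bookkeeping with the trace formulas and the Fredholm alternative.
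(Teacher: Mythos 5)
Your proposal is correct and follows essentially the same route as the paper: the same factorization $I_4 - (M\mathcal{C}_\lambda)^2$ with the smoothing properties of $\mathcal{C}_\lambda^2 - \tfrac{1}{4c^2}I_4$ and $\beta\mathcal{C}_\lambda + \mathcal{C}_\lambda\beta$ from Proposition~\ref{proposition_mapping_properties}, the same use of $\Phi_\lambda$ and the trace relations~\eqref{jump1}--\eqref{jump2} for the Birman--Schwinger correspondence, the same injectivity argument via symmetry of $A_{\pm\eta,\pm\tau}$, and the Fredholm alternative. The only cosmetic differences are that you use the single factor $T_- = I_4 - (\eta I_4+\tau\beta)\mathcal{C}_\lambda$ for both (i) and (iii) where the paper uses $I_4 - \mathcal{C}_\lambda(\eta I_4 - \tau\beta)$ in (i), and that in (iii) you exhibit the inverse explicitly as $T_-(T_-T_+)^{-1}$ rather than appealing to bijectivity plus the bounded inverse theorem.
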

\begin{proof}
(i) If $\big(I_4 + (\eta I_4 + \tau \beta) \mathcal{C}_\lambda \big) \varphi$ belongs to $H^{1/2}(\Sigma)^4$, then by~\eqref{equation_Phi_smooth} also
  \begin{equation*} %\label{equation_product_C_lambda}
    \begin{split}
      \psi &:= \big( I_4 - \mathcal{C}_\lambda (\eta I_4 - \tau \beta)  \big) \big(I_4 + (\eta I_4 + \tau \beta) \mathcal{C}_\lambda \big) \varphi \\
      &= \left( 1 - \frac{\eta^2 - \tau^2}{4 c^2} \right) \varphi + \tau (\mathcal{C}_\lambda \beta + \beta \mathcal{C}_\lambda) \varphi + (\eta^2 - \tau^2) \left( \frac{1}{4 c^2} I_4 - \mathcal{C}_\lambda^2 \right) \varphi
    \end{split}
  \end{equation*}
  belongs to $H^{1/2}(\Sigma)^4$. Making use of Proposition~\ref{proposition_mapping_properties} this implies that also
  \begin{equation*}
    \varphi = \frac{4 c^2}{4 c^2 - \eta^2 + \tau^2} \left( \psi - \tau (\mathcal{C}_\lambda \beta + \beta \mathcal{C}_\lambda) \varphi - (\eta^2 - \tau^2) \left( \frac{1}{4 c^2} I_4 - \mathcal{C}_\lambda^2 \right) \varphi \right)
  \end{equation*}
  belongs to $H^{1/2}(\Sigma)^4$, which is the claim of item (i).

  (ii) Assume first that $\lambda \in \rho(A_0)$ is an eigenvalue of $A_{\eta, \tau}$ with eigenfunction $f_\lambda \neq 0$.
  Then, according to~\eqref{equation_kernel}
  there exists a density $0 \neq \varphi \in H^{1/2}(\Sigma)^4$ such that 
  $f_\lambda = \Phi_\lambda \varphi$. Since $f_\lambda \in \dom A_{\eta, \tau}$ this function fulfils
  \eqref{equation_jump_condition_intro}.
  Using~\eqref{jump1} and \eqref{jump2} this yields
  \begin{equation} \label{equation_Birman_Schwinger}
    \begin{split}
      0 %&= (\Gamma_0 + (\eta I_4 + \tau \beta) \Gamma_1) \Phi_\lambda \varphi \\
      &= i c \alpha \cdot \nu \big( (\Phi_\lambda \varphi)_+ - (\Phi_\lambda \varphi)_-\big) + \frac{1}{2} (\eta I_4 + \tau \beta) \big( (\Phi_\lambda \varphi)_+ + (\Phi_\lambda \varphi)_-\big) \\
      &= (I_4 + (\eta I_4 + \tau \beta) \mathcal{C}_\lambda) \varphi,
    \end{split}
  \end{equation}
  i.e. $-1$ is an eigenvalue of $(\eta I_4 + \tau \beta) \mathcal{C}_\lambda$.

  Conversely, assume that $-1$ is an eigenvalue of $(\eta I_4 + \tau \beta) \mathcal{C}_\lambda$ with  eigenfunction $\varphi \neq 0$. Then it follows first from item~(i) that $\varphi \in H^{1/2}(\Sigma)^4$ and hence $f_\lambda := \Phi_\lambda \varphi \neq 0$ belongs by~\eqref{equation_Phi_smooth} to
  $H^1(\Omega_+)^4 \oplus H^1(\Omega_-)^4$. Using again~\eqref{jump1} and \eqref{jump2} and $\big( I_4 + (\eta I_4 + \tau \beta) \mathcal{C}_\lambda \big) \varphi = 0$ we obtain in the same way as in~\eqref{equation_Birman_Schwinger} that %$(\Gamma_0 + (\eta I_4 + \tau \beta) \Gamma_1) f_\lambda = 0$
  $f_\lambda$ fulfills the coupling condition~\eqref{equation_jump_condition_intro}. This shows $f_\lambda \in \dom A_{\eta, \tau}$.
  Finally, equation~\eqref{equation_kernel} yields 
  \begin{equation*}
    (A_{\eta, \tau} - \lambda) f_\lambda = (A_{\eta, \tau} - \lambda) \Phi_\lambda \varphi = 0
  \end{equation*}
  and hence $\lambda \in \sigma_\textup{p}(A_{\eta, \tau})$.
  
  (iii) To show the claim it suffices to prove that $I_4 + (\eta I_4 + \tau \beta) \mathcal{C}_\lambda$ is bijective. By (ii) it is clear that this operator is injective, as $\lambda \in \mathbb{C} \setminus \mathbb{R}$ and $A_{\eta, \tau}$ is symmetric by Lemma~\ref{lemma_symmetric}. Moreover,
  \begin{equation} \label{equation_C_lambda_range}
    \begin{split}
      \ran \big[I_4 + (\eta I_4 + \tau \beta) \mathcal{C}_\lambda\big]
      &\supset \ran \big[(I_4 + (\eta I_4 + \tau \beta) \mathcal{C}_\lambda)
      (I_4 - (\eta I_4 + \tau \beta) \mathcal{C}_\lambda) \big] \\
      &= \ran \big[I_4 - \big( (\eta I_4 + \tau \beta) \mathcal{C}_\lambda \big)^2 \big]
    \end{split}
  \end{equation}
  holds. Note that $I_4 - \big( (\eta I_4 + \tau \beta) \mathcal{C}_\lambda \big)^2$ is injective, as otherwise $\lambda$ would be a non-real eigenvalue of one of the symmetric operators $A_{\eta, \tau}$ or $A_{-\eta, -\tau}$ by (ii). Moreover,
  \begin{equation*}
    \begin{split}
      I_4 - \big( (\eta I_4 + \tau \beta) \mathcal{C}_\lambda \big)^2
        &= I_4 - \tau (\mathcal{C}_\lambda \beta + \beta \mathcal{C}_\lambda) (\eta I_4 + \tau \beta) \mathcal{C}_\lambda - (\eta^2 - \tau^2) \mathcal{C}_\lambda^2 \\
      &= \left(1 - \frac{\eta^2 - \tau^2}{4 c^2} \right) I_4 + \mathcal{K}_\lambda,
    \end{split}
  \end{equation*}
  where $\mathcal{K}_\lambda$ is a compact operator in $L^2(\Sigma)^4$ by~Proposition~\ref{proposition_mapping_properties}. Therefore, Fredholm's alternative implies that $I_4 - \big( (\eta I_4 + \tau \beta) \mathcal{C}_\lambda \big)^2$ is also surjective. From~\eqref{equation_C_lambda_range} we deduce that the injective operator $I_4 + (\eta I_4 + \tau \beta) \mathcal{C}_\lambda$ is also surjective, which yields the claim of assertion~(iii).
\end{proof}

Now we are prepared to show the self-adjointness of $A_{\eta, \tau}$ in the case of noncritical interaction strengths. Moreover, we prove an explicit Krein type resolvent formula for $A_{\eta, \tau}$ which relates the resolvent of $A_{\eta, \tau}$ to the resolvent of $A_0$ and a perturbation term, which consists of the integral operators $\Phi_\lambda$ and $\mathcal{C}_\lambda$ introduced in~\eqref{def_Phi_lambda} and~\eqref{def_C_lambda}, and contains the spectral information of $A_{\eta, \tau}$.

\begin{thm} \label{theorem_self_adjoint}
  Let $\eta, \tau \in \mathbb{R}$ such that $\eta^2 - \tau^2 \neq 4 c^2$. Then the operator $A_{\eta, \tau}$ defined by~\eqref{def_A_eta} is self-adjoint in $L^2(\mathbb{R}^3)^4$ and 
  \begin{equation*}
    \big(A_{\eta, \tau} - \lambda \big)^{-1} = (A_0 - \lambda)^{-1} - \Phi_\lambda \big( I_4 + (\eta I_4 + \tau \beta) \mathcal{C}_\lambda \big)^{-1} (\eta I_4 + \tau \beta) \Phi_{\bar{\lambda}}^*
  \end{equation*}
  holds for all $\lambda \in \mathbb{C} \setminus \mathbb{R}$.
\end{thm}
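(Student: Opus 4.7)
The plan is to exhibit the resolvent formula explicitly and verify that the operator on the right-hand side is a two-sided inverse of $A_{\eta,\tau} - \lambda$ for $\lambda \in \mathbb{C}\setminus\mathbb{R}$. Since $A_{\eta,\tau}$ is already symmetric by Lemma~\ref{lemma_symmetric}, establishing that $\ran(A_{\eta,\tau} - \lambda) = L^2(\mathbb{R}^3)^4$ for some nonreal $\lambda$ (equivalently, the existence of a bounded right inverse) will immediately yield self-adjointness.

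First, I would fix $\lambda \in \mathbb{C}\setminus\mathbb{R}$, $f \in L^2(\mathbb{R}^3)^4$, set $u := (A_0 - \lambda)^{-1}f \in H^1(\mathbb{R}^3)^4$, and note that $\Phi_{\bar\lambda}^* f = u|_\Sigma \in H^{1/2}(\Sigma)^4$ by~\eqref{equation_Phi_lambda_star}. Define
\begin{equation*}
  \varphi := \bigl(I_4 + (\eta I_4 + \tau\beta)\mathcal{C}_\lambda\bigr)^{-1}(\eta I_4 + \tau\beta)\Phi_{\bar\lambda}^* f,
\end{equation*}
which is well defined in $L^2(\Sigma)^4$ by Lemma~\ref{lemma_Birman_Schwinger}~(iii). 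Because $(\eta I_4+\tau\beta)u|_\Sigma \in H^{1/2}(\Sigma)^4$, i.e.\ $(I_4+(\eta I_4+\tau\beta)\mathcal{C}_\lambda)\varphi \in H^{1/2}(\Sigma)^4$, the regularity bootstrap in Lemma~\ref{lemma_Birman_Schwinger}~(i) yields $\varphi \in H^{1/2}(\Sigma)^4$. Then~\eqref{equation_Phi_smooth} gives $\Phi_\lambda\varphi \in H^1(\Omega_+)^4\oplus H^1(\Omega_-)^4$, so the candidate $g := u - \Phi_\lambda\varphi$ lies in $H^1(\Omega_+)^4\oplus H^1(\Omega_-)^4$.

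Next I would verify the coupling condition~\eqref{equation_jump_condition_intro} for $g$. Using the jump identities~\eqref{jump1}, \eqref{jump2}, and $(\alpha\cdot\nu)^2=I_4$, one computes
\begin{equation*}
  ic(\alpha\cdot\nu)(g_+|_\Sigma - g_-|_\Sigma) = ic(\alpha\cdot\nu)\cdot\tfrac{i}{c}(\alpha\cdot\nu)\varphi = -\varphi,
\end{equation*}
while
\begin{equation*}
  \tfrac{1}{2}(\eta I_4+\tau\beta)(g_+|_\Sigma + g_-|_\Sigma) = (\eta I_4+\tau\beta)u|_\Sigma - (\eta I_4+\tau\beta)\mathcal{C}_\lambda\varphi = \varphi,
\end{equation*}
where the last equality uses the defining relation $(I_4+(\eta I_4+\tau\beta)\mathcal{C}_\lambda)\varphi = (\eta I_4+\tau\beta)u|_\Sigma$. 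Hence the sum vanishes and $g \in \dom A_{\eta,\tau}$. To verify $(A_{\eta,\tau}-\lambda)g = f$, I would apply $A_{\eta,\tau}-\lambda$ pointwise in $\Omega_\pm$: the contribution from $u$ yields $f$, while $\Phi_\lambda\varphi$ lies in the kernel of $-ic\alpha\cdot\nabla + mc^2\beta - \lambda$ on $\Omega_\pm$ by~\eqref{equation_kernel}, so it drops out.

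Putting these pieces together, the map $R(\lambda)f := u - \Phi_\lambda\varphi$ is a bounded right inverse of $A_{\eta,\tau}-\lambda$ for every $\lambda \in \mathbb{C}\setminus\mathbb{R}$; combined with the symmetry from Lemma~\ref{lemma_symmetric} this forces $A_{\eta,\tau}$ to be self-adjoint, and then $R(\lambda) = (A_{\eta,\tau}-\lambda)^{-1}$ is precisely the asserted formula. The main obstacle is the regularity step: the boundary density $\varphi$ is a priori only in $L^2(\Sigma)^4$, and without the bootstrap from Lemma~\ref{lemma_Birman_Schwinger}~(i)---which ultimately relies on the noncriticality hypothesis $\eta^2-\tau^2\neq 4c^2$ through Proposition~\ref{proposition_mapping_properties}---one cannot conclude that $\Phi_\lambda\varphi$ has the $H^1$-regularity on each side of $\Sigma$ that the operator domain requires. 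Everything else (the jump computation and the pointwise PDE identity) is a direct consequence of the definitions and the identities~\eqref{jump1}--\eqref{jump2} and~\eqref{equation_kernel}.
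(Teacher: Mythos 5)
Your proof is correct and follows essentially the same route as the paper's: define the candidate resolvent, invoke Lemma~\ref{lemma_Birman_Schwinger}~(iii) for invertibility, use the bootstrap Lemma~\ref{lemma_Birman_Schwinger}~(i) together with~\eqref{equation_Phi_smooth} to get the $H^1$-regularity of $g$, verify the coupling condition via~\eqref{jump1}--\eqref{jump2}, and confirm $(A_{\eta,\tau}-\lambda)g=f$ via~\eqref{equation_kernel}, concluding self-adjointness from symmetry plus surjectivity at nonreal $\lambda$. Your split of the jump computation into the separate $-\varphi$ and $+\varphi$ contributions is slightly more explicit than the paper's one-line version, but the argument and all key ingredients are the same.
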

\begin{proof}
  We have already shown in Lemma~\ref{lemma_symmetric} that $A_{\eta, \tau}$ is symmetric. Hence, it suffices to prove that $\ran(A_{\eta, \tau} - \lambda) = L^2(\mathbb{R}^3)^4$ for $\lambda \in \mathbb{C} \setminus \mathbb{R}$. Let $\lambda \in \mathbb{C} \setminus \mathbb{R}$ and $f \in L^2(\mathbb{R}^3)^4$ be fixed. We set
  \begin{equation*} %\label{Krein1}
    g := (A_0 - \lambda)^{-1} f - \Phi_\lambda \big( I_4 + (\eta I_4 + \tau \beta) \mathcal{C}_\lambda \big)^{-1} (\eta I_4 + \tau \beta) \Phi_{\bar{\lambda}}^* f.
  \end{equation*}
  Note that $g$ is well defined by Lemma~\ref{lemma_Birman_Schwinger}~(iii).
  We prove that $g \in \dom A_{\eta, \tau}$ and $(A_{\eta, \tau} - \lambda) g = f$. This shows then $\ran(A_{\eta, \tau} - \lambda) = L^2(\mathbb{R}^3)^4$ and the claimed resolvent formula.
  
  First, we note that $(\eta I_4 + \tau \beta) \Phi_{\bar{\lambda}}^* f \in H^{1/2}(\Sigma)^4$ by~\eqref{equation_Phi_lambda_star} and hence it follows from Lemma~\ref{lemma_Birman_Schwinger}~(i) that
  \begin{equation*}
    \big( I_4 + (\eta I_4 + \tau \beta) \mathcal{C}_\lambda \big)^{-1} (\eta I_4 + \tau \beta) \Phi_{\bar{\lambda}}^* f \in H^{1/2}(\Sigma)^4.
  \end{equation*}
  Thus, we conclude from~\eqref{def_free_Dirac} and~\eqref{equation_Phi_smooth} that $g \in H^1(\Omega_+)^4 \oplus H^1(\Omega_-)^4$.
  
  Next, since $(A_0 - \lambda)^{-1} f \in H^1(\mathbb{R}^3)^4$ the jump of its trace at $\Sigma$ vanishes and we find, using \eqref{jump1}, \eqref{jump2}, and \eqref{equation_Phi_lambda_star}, that
  \begin{equation*}
    \begin{split}
      %(\Gamma_0 &+ (\eta I_4 + \tau \beta) \Gamma_1) g=
      i c \alpha \cdot \nu& (g_+|_\Sigma - g_-|_\Sigma) + \frac{1}{2} (\eta I_4 + \tau \beta) (g_+|_\Sigma + g_-|_\Sigma)       = (\eta I_4 + \tau \beta) \big( (A_0 - \lambda)^{-1} f\big) \big|_\Sigma \\
      &\quad- \big( I_4 + (\eta I_4 + \tau \beta) \mathcal{C}_\lambda \big) \big( I_4 + (\eta I_4 + \tau \beta) \mathcal{C}_\lambda \big)^{-1} (\eta I_4 + \tau \beta) \Phi_{\bar{\lambda}}^* f = 0,
    \end{split}
  \end{equation*}
  which shows $f \in \dom A_{\eta, \tau}$. Employing finally~\eqref{equation_kernel} we get 
  $(A_{\eta, \tau} - \lambda) g = f$. Hence, the theorem is shown.
\end{proof}

For the self-adjointness of $A_{\eta, \tau}$ in the critical case of interaction strengths, i.e. for $\eta^2 - \tau^2 = 4 c^2$, no result is known so far for combinations of electrostatic and Lorentz scalar interactions. But we would like to review a result from \cite{BH17} (see also \cite{OV17}), where the self-adjointness of $A_{\eta, \tau}$ was shown for purely electrostatic interactions in the critical case, i.e. when $\eta = \pm 2c$ and $\tau = 0$. Already in this simplest example one sees that the properties of $A_{\eta, \tau}$ are completely different in the critical case than in the noncritical case.
The key observation in \cite{BH17} and \cite{OV17} to study the self-adjointness of $A_{\pm 2c, 0}$ is the fact that functions $f \in L^2(\Omega_\pm)^4$ with $\alpha \cdot \nabla f \in L^2(\Omega_\pm)^4$ in the distributional sense have traces in $H^{-1/2}(\Sigma)^4 := (H^{1/2}(\Sigma)^4)'$, which is a larger space than $L^2(\Sigma)^4$. The idea below in~\eqref{jump_condition_critical} is to consider the jump condition~\eqref{equation_jump_condition_intro} not in $L^2(\Sigma)^4$, but in $H^{-1/2}(\Sigma)^4$.

\begin{thm} \label{theorem_critical_case}
  Let $A_{\pm 2c, 0}$ be defined by~\eqref{def_A_eta}. Then $A_{\pm 2c, 0}$ is essentially self-adjoint in $L^2(\mathbb{R}^3)^4$. The self-adjoint closure $\overline{A_{\pm 2c, 0}}$ is defined on the set
  \begin{equation} \label{jump_condition_critical}
    \begin{split}
      \dom \overline{A_{\pm 2c, 0}} = \big\{ &f = f_+ \oplus f_- \in L^2(\Omega_+)^4 \oplus L^2(\Omega_-)^4: \alpha \cdot \nabla f_\pm \in L^2(\Omega_\pm)^4, \\
      &i \alpha \cdot \nu (f_+|_\Sigma - f_-|_\Sigma) = \mp (f_+|_\Sigma + f_-|_\Sigma) \text{ in } H^{-1/2}(\Sigma)^4 \big\}
    \end{split}
  \end{equation}
  and acts as
  \begin{equation*}
    \overline{A_{\pm 2c, 0}} f = (-i c \alpha \cdot \nabla + m c^2 \beta) f_+ \oplus (-i c \alpha \cdot \nabla + m c^2 \beta) f_-.
  \end{equation*}
  The closure $\overline{A_{\pm 2c, 0}}$ is a proper extension of $A_{\pm 2c, 0}$, i.e. $A_{\pm 2c, 0} \neq \overline{A_{\pm 2c, 0}}$.
\end{thm}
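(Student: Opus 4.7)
The result is taken from \cite{BH17, OV17}, and since the noncritical proof of Theorem~\ref{theorem_self_adjoint} breaks down precisely when $\eta^2-\tau^2=4c^2$, the plan is to enlarge the setting. Write $T_\pm$ for the operator on the right-hand side of \eqref{jump_condition_critical}. The first step is an extended trace theory: for $g \in L^2(\Omega_\pm)^4$ with $\alpha\cdot \nabla g \in L^2(\Omega_\pm)^4$, a Green-type pairing against $H^1(\Omega_\pm)^4$-lifts of test functions in $H^{1/2}(\Sigma)^4$ defines $g|_\Sigma \in H^{-1/2}(\Sigma)^4 := \big(H^{1/2}(\Sigma)^4\big)'$ continuously in the graph norm. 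Consequently the jump relation in \eqref{jump_condition_critical} is a well-defined closed linear constraint and $T_\pm$ is closed. Symmetry of $T_\pm$ then follows from the same Green identity applied on both sides of $\Sigma$, pairing the $H^{-1/2}$-jump of one argument with the $H^{1/2}$-sum of the other via the duality, with the interaction condition causing the boundary terms to cancel as in the proof of Lemma~\ref{lemma_symmetric}.

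For self-adjointness one must establish $\ran(T_\pm - \lambda) = L^2(\mathbb{R}^3)^4$ for $\lambda \in \mathbb{C} \setminus \mathbb{R}$. The key tool is the extension by duality of the integral operators: $\Phi_\lambda$ extends to a bounded map $H^{-1/2}(\Sigma)^4 \to L^2(\mathbb{R}^3)^4$ and $\mathcal{C}_\lambda$ to a bounded map $H^{-1/2}(\Sigma)^4 \to H^{-1/2}(\Sigma)^4$, with the jump formulas \eqref{jump1}, \eqref{jump2} still valid in this weaker sense. Writing the critical Birman--Schwinger operator as $I_4 \pm 2c\,\mathcal{C}_\lambda$ and using Proposition~\ref{proposition_mapping_properties}~(i), the identity
\begin{equation*}
(I_4 + 2c\,\mathcal{C}_\lambda)(I_4 - 2c\,\mathcal{C}_\lambda) = -4c^2\Big(\mathcal{C}_\lambda^2 - \tfrac{1}{4c^2}I_4\Big)
\end{equation*}
is compact on $L^2(\Sigma)^4$ and therefore rules out $L^2$-bounded invertibility of $I_4 \pm 2c\,\mathcal{C}_\lambda$; however, the same identity shows that $I_4 \pm 2c\,\mathcal{C}_\lambda$ is Fredholm when viewed as an operator from $H^{-1/2}(\Sigma)^4$ to $L^2(\Sigma)^4$. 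The symmetry of $T_\pm$ eliminates nonreal kernel elements, so $(I_4 \pm 2c\,\mathcal{C}_\lambda)^{-1}\colon L^2(\Sigma)^4 \to H^{-1/2}(\Sigma)^4$ is available for $\lambda\in\mathbb{C}\setminus\mathbb{R}$. One then verifies, in strict analogy with Theorem~\ref{theorem_self_adjoint}, that
\begin{equation*}
R_\lambda f := (A_0-\lambda)^{-1} f \mp 2c\, \Phi_\lambda \big(I_4 \pm 2c\,\mathcal{C}_\lambda\big)^{-1} \Phi_{\bar\lambda}^* f
\end{equation*}
sends $L^2(\mathbb{R}^3)^4$ into $\dom T_\pm$ and satisfies $(T_\pm - \lambda)R_\lambda = I$, yielding self-adjointness of $T_\pm$.

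Finally, to identify $T_\pm$ with $\overline{A_{\pm 2c,0}}$ one applies a density/mollification argument: any $f \in \dom T_\pm$ is approximated in the graph norm by functions in $\dom A_{\pm 2c,0}$ obtained by regularizing near $\Sigma$ so that the $\Omega_\pm$-restrictions become $H^1$ and the jump condition holds in $L^2(\Sigma)^4$. The strict inclusion $A_{\pm 2c,0}\subsetneq T_\pm$ is then exhibited by choosing $\varphi\in H^{-1/2}(\Sigma)^4\setminus L^2(\Sigma)^4$ in the preimage under $(I_4\pm 2c\,\mathcal{C}_\lambda)^{-1}$ of a suitable $L^2$-datum: the element $\Phi_\lambda\varphi$ lies in $\dom T_\pm$, but its $\Omega_\pm$-restrictions are not in $H^1$. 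The main obstacle throughout is the regularity analysis of $\mathcal{C}_\lambda$ on the $H^{\pm 1/2}$-scale at the critical coupling: the $L^2$-compactness of $\mathcal{C}_\lambda^2-\tfrac{1}{4c^2}I_4$ from Proposition~\ref{proposition_mapping_properties} must be upgraded to the precise Fredholm behavior of $I_4 \pm 2c\,\mathcal{C}_\lambda$ between $H^{-1/2}(\Sigma)^4$ and $L^2(\Sigma)^4$, and this refined mapping analysis is the technical heart of \cite{BH17, OV17}.
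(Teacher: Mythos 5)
The paper itself does not prove Theorem~\ref{theorem_critical_case}; it is stated as a review of results from \cite{BH17, OV17}, and the surrounding text only records the key observation (functions $f$ with $f,\alpha\cdot\nabla f\in L^2(\Omega_\pm)^4$ possess $H^{-1/2}(\Sigma)^4$ traces, so the jump condition can be posed in $H^{-1/2}$). Your proposal correctly captures that observation and lays out a plausible road map mirroring Theorem~\ref{theorem_self_adjoint} — closedness and symmetry of $T_\pm$, a critical Krein-type resolvent, density to obtain essential self-adjointness, and exhibiting a density $\varphi\in H^{-1/2}\setminus L^2$ to get strictness of the extension.

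There is, however, a genuine gap at the step you yourself call ``the technical heart''. You assert that the identity $(I_4+2c\,\mathcal{C}_\lambda)(I_4-2c\,\mathcal{C}_\lambda)=-4c^2(\mathcal{C}_\lambda^2-\tfrac{1}{4c^2}I_4)$ shows that $I_4\pm 2c\,\mathcal{C}_\lambda$ is \emph{Fredholm from $H^{-1/2}(\Sigma)^4$ to $L^2(\Sigma)^4$}. This cannot be right as stated: $I_4+2c\,\mathcal{C}_\lambda$ is a zero-order operator, so it does not even define a bounded map from $H^{-1/2}(\Sigma)^4$ into the strictly smaller space $L^2(\Sigma)^4$ — already the $I_4$-summand is not smoothing, and Proposition~\ref{proposition_mapping_properties}~(i) only tells you that the \emph{product} $\mathcal{C}_\lambda^2-\tfrac1{4c^2}I_4$ gains half a derivative, not that either factor does. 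In fact the difficulty in the critical case is precisely that $I_4\pm 2c\,\mathcal{C}_\lambda$ is \emph{not} Fredholm between the natural Sobolev pairings: its (matrix-valued) principal symbol has both a nonvanishing and a vanishing eigenvalue, so the operator behaves like order~$0$ on one subbundle and a strictly negative order on the complementary subbundle. The resolution in \cite{BH17, OV17} relies on this anisotropic/projection structure (and corresponding domain descriptions like $\{\varphi\in H^{-1/2}(\Sigma)^4:(I_4\pm 2c\,\mathcal{C}_\lambda)\varphi\in L^2(\Sigma)^4\}$), not on a uniform Fredholm bound from $H^{-1/2}$ to $L^2$. Since you defer exactly this regularity analysis to the references and the replacement you give is incorrect, the argument for the range condition, and hence for essential self-adjointness and strictness of the extension, is not established by the proposal.
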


\section{Spectral properties} \label{section_spectral_properties}

In this section we provide the basic spectral properties of the operator $A_{\eta, \tau}$ defined by~\eqref{def_A_eta}. In the case of noncritical interaction strengths, i.e. when $\eta^2 - \tau^2 \neq 4 c^2$, we are able to provide a number of results about the qualitative spectral properties. We close this section with a result from \cite{BH17} on the spectrum of the self-adjoint closure of $A_{\pm 2c, 0}$ in the case of purely electrostatic critical interactions, which shows that the spectral properties for critical interaction strengths can be of a completely different nature.

First, we discuss the basic results in the noncritical case. In particular, using a perturbation argument based on the Krein type resolvent formula from Theorem~\ref{theorem_self_adjoint} we compute the essential spectrum of $A_{\eta, \tau}$. Moreover, since the singular perturbation is only supported on a compact surface and since functions in $\dom A_{\eta, \tau}$ have $H^1$-smoothness, we can show that $A_{\eta, \tau}$ has only finitely many discrete eigenvalues. Eventually, we deduce from the Birman-Schwinger principle that $A_{\eta, \tau}$ has no discrete eigenvalues, if the interaction strengths are sufficiently small.

\begin{thm} \label{theorem_spectral_properties}
  Let $\eta, \tau \in \mathbb{R}$ such that $\eta^2 - \tau^2 \neq 4 c^2$
  and let the self-adjoint operator $A_{\eta, \tau}$ be defined by~\eqref{def_A_eta}.
  Then the following assertions hold.
  \begin{itemize}
    \item[$\textup{(i)}$] $\sigma_{\textup{ess}}(A_{\eta, \tau}) = (-\infty, -m c^2] \cup [m c^2, \infty)$.
    \item[$\textup{(ii)}$] $\sigma_{\textup{disc}}(A_{\eta, \tau})$ is finite.
    \item[$\textup{(iii)}$] There exists a constant $K > 0$ such that $\sigma_{\textup{disc}}(A_{\eta, \tau}) = \emptyset$, if $| \eta + \tau | < K$ and $| \eta - \tau | < K$.
  \end{itemize}
\end{thm}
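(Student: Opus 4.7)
The plan is to reduce all three assertions, via the Birman-Schwinger principle of Lemma~\ref{lemma_Birman_Schwinger} and the Krein-type resolvent formula of Theorem~\ref{theorem_self_adjoint}, to mapping properties of the sandwich operator $F(\lambda):=I_4+(\eta I_4+\tau\beta)\mathcal{C}_\lambda$ on the surface~$\Sigma$.

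For item (i), I would check that the resolvent difference
\[
(A_{\eta,\tau}-\lambda)^{-1}-(A_0-\lambda)^{-1} = -\Phi_\lambda F(\lambda)^{-1}(\eta I_4+\tau\beta)\Phi_{\bar{\lambda}}^{\,*}
\]
is compact in $L^2(\mathbb{R}^3)^4$ for any $\lambda\in\mathbb{C}\setminus\mathbb{R}$ and then invoke Weyl's essential spectrum theorem. The adjoint potential $\Phi_{\bar{\lambda}}^{\,*}$ maps $L^2(\mathbb{R}^3)^4$ boundedly into $H^{1/2}(\Sigma)^4$ by~\eqref{equation_Phi_lambda_star} together with the trace theorem, multiplication by the matrix $\eta I_4+\tau\beta$ preserves $H^{1/2}(\Sigma)^4$, and Lemma~\ref{lemma_Birman_Schwinger}(i) combined with the closed graph theorem implies that $F(\lambda)^{-1}$ restricts to a bounded operator on $H^{1/2}(\Sigma)^4$. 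Composing with the compact embedding $H^{1/2}(\Sigma)^4\hookrightarrow L^2(\Sigma)^4$ and the bounded potential $\Phi_\lambda\colon L^2(\Sigma)^4\to L^2(\mathbb{R}^3)^4$ yields the desired compactness, and the conclusion $\sigma_\textup{ess}(A_{\eta,\tau})=\sigma_\textup{ess}(A_0)=(-\infty,-mc^2]\cup[mc^2,\infty)$ follows.

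For item (iii), I would exploit the uniform bound $\|\mathcal{C}_\lambda\|\le K_0$ from~\eqref{C_lambda_uniformly_bounded}. Since $\beta=\mathrm{diag}(I_2,-I_2)$, the matrix $\eta I_4+\tau\beta$ is block-diagonal with blocks $(\eta+\tau)I_2$ and $(\eta-\tau)I_2$, so its operator norm equals $\max(|\eta+\tau|,|\eta-\tau|)$. Choosing the constant in the statement to be $K:=1/K_0$, the hypothesis $|\eta\pm\tau|<K$ forces $\|(\eta I_4+\tau\beta)\mathcal{C}_\lambda\|<1$ uniformly on $(-mc^2,mc^2)$, so $-1$ cannot lie in the spectrum of $(\eta I_4+\tau\beta)\mathcal{C}_\lambda$ there. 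Combined with item (i), which confines $\sigma_\textup{disc}(A_{\eta,\tau})$ to $(-mc^2,mc^2)$, Lemma~\ref{lemma_Birman_Schwinger}(ii) gives $\sigma_\textup{disc}(A_{\eta,\tau})=\emptyset$.

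Item (ii) is the technical core. I would apply analytic Fredholm theory to the family $\lambda\mapsto F(\lambda)$, which is operator-norm analytic on $\rho(A_0)$ because the Green function $G_\lambda$, and hence $\mathcal{C}_\lambda$, depends analytically on $\lambda$. Setting $S_\lambda:=I_4-(\eta I_4+\tau\beta)\mathcal{C}_\lambda$, the computations at the end of the proof of Lemma~\ref{lemma_Birman_Schwinger}(iii) give
\[
F(\lambda)\,S_\lambda = S_\lambda F(\lambda) = \Bigl(1-\tfrac{\eta^2-\tau^2}{4c^2}\Bigr)I_4+\mathcal{K}_\lambda
\]
with $\mathcal{K}_\lambda$ compact on $L^2(\Sigma)^4$, and the noncritical hypothesis $\eta^2-\tau^2\neq 4c^2$ makes the right-hand side a Fredholm operator of index zero. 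This forces $F(\lambda)$ itself to be Fredholm, and invertibility at any single non-real $\lambda$ from Lemma~\ref{lemma_Birman_Schwinger}(iii) pins its Fredholm index to zero throughout the connected resolvent set $\rho(A_0)$. The analytic Fredholm theorem then shows that the set of $\lambda\in\rho(A_0)$ at which $F(\lambda)$ fails to be invertible is discrete in $\rho(A_0)$, and Lemma~\ref{lemma_Birman_Schwinger}(ii) transfers this discreteness to $\sigma_\textup{disc}(A_{\eta,\tau})\subset(-mc^2,mc^2)$. The remaining and genuinely delicate step, which I expect to be the main obstacle, is to rule out accumulation at the thresholds $\pm mc^2$. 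My plan is to establish that $\mathcal{C}_\lambda$ extends norm-continuously to $\lambda=\pm mc^2$ (where $\sqrt{\lambda^2/c^2-(mc)^2}\to 0$ and $G_\lambda$ degenerates to a Laplace-type kernel dressed by a Dirac matrix), so that $F(\pm mc^2)$ remains Fredholm, and then apply the analytic Fredholm theorem on an open neighborhood of the closed interval $[-mc^2,mc^2]$ to exclude any accumulation point of the non-invertibility set there.
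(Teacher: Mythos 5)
Your treatment of items (i) and (iii) is essentially the paper's own: (i) is the compactness of the resolvent difference (via compactness of $\Phi_{\bar\lambda}^*$ coming from the trace theorem and the compact embedding $H^{1/2}(\Sigma)^4 \hookrightarrow L^2(\Sigma)^4$) followed by Weyl's theorem, and (iii) is a one-line consequence of the Birman--Schwinger principle and the uniform bound~\eqref{C_lambda_uniformly_bounded}, with the same constant $K = 1/\sup_{\lambda}\|\mathcal{C}_\lambda\|$ and the correct observation $\|\eta I_4 + \tau\beta\| = \max(|\eta+\tau|,|\eta-\tau|)$.

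Item (ii) is where you diverge from the paper, and where there is a genuine gap. Your Fredholm analysis of $F(\lambda) = I_4 + (\eta I_4 + \tau\beta)\mathcal{C}_\lambda$ correctly yields that the set of $\lambda \in \rho(A_0)$ at which $F(\lambda)$ fails to be invertible is discrete \emph{in} $\rho(A_0)$; but this alone permits an infinite sequence of discrete eigenvalues accumulating at the threshold points $\pm mc^2 \in \partial\rho(A_0)$. Your proposed remedy --- showing $\mathcal{C}_\lambda$ extends continuously to $\pm mc^2$ and then applying the analytic Fredholm theorem on an open neighborhood of the closed interval $[-mc^2,mc^2]$ --- cannot be carried out as stated. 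The map $\lambda \mapsto \mathcal{C}_\lambda$ cannot be holomorphic in any open neighborhood of $\pm mc^2$, because the Green function depends on $\sqrt{\lambda^2/c^2 - (mc)^2}$, which has a branch point there; the natural domain of analyticity is precisely $\rho(A_0)$. Continuous extension to the threshold, even if it holds, gives no discreteness statement: you would additionally have to show that $F(\pm mc^2)$ is invertible (i.e., rule out threshold resonances) or that a nontrivial kernel there is still isolated, and neither is addressed. The paper circumvents the threshold issue entirely by a variational argument: one passes to $(A_{\eta,\tau})^2$ whose essential spectrum starts at $(mc^2)^2$, writes its quadratic form $\mathfrak{a}[f] = \|A_{\eta,\tau}f\|^2$ explicitly via integration by parts, bounds $\mathfrak{a}$ from below by a decoupled form $\mathfrak{b}_\textup{int}\oplus\mathfrak{b}_\textup{ext}$ on $B(0,R)$ and its complement, notes $\mathfrak{b}_\textup{ext} \geq (mc^2)^2$ and that $\mathfrak{b}_\textup{int}$ lives on a bounded set so has compact resolvent, and concludes by the min-max principle that only finitely many eigenvalues of $(A_{\eta,\tau})^2$ lie below $(mc^2)^2$. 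If you want to retain a Birman--Schwinger route, the standard way to control the threshold is to change to the variable $k = \sqrt{\lambda^2/c^2 - (mc)^2}$ (so that holomorphy holds in a half-plane including $k=0$) and analyze $F$ as a function of $k$ near $k=0$; but this is additional work that the paper's bracketing argument makes unnecessary.
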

\begin{proof}
  In order to show item~(i) we note that for $\lambda \in \mathbb{C} \setminus \mathbb{R}$ 
  the operators 
  \begin{equation*}
    \Phi_\lambda, \Phi_{\bar{\lambda}}^*, \quad \text{and} \quad
    \big( I_4 + (\eta I_4 + \tau \beta) \mathcal{C}_\lambda \big)^{-1}
  \end{equation*}
  are bounded in the respective 
  $L^2$-spaces, see Lemma~\ref{lemma_Birman_Schwinger}. Moreover, it follows from~\eqref{equation_Phi_lambda_star} and the trace theorem~\eqref{trace_theorem} that $\Phi_{\bar{\lambda}}^*$  is bounded from $L^2(\mathbb{R}^3)^4$ to $H^{1/2}(\Sigma)^4$ and since $H^{1/2}(\Sigma)^4$ is compactly embedded in $L^2(\Sigma)^4$, see \cite[Theorem~4.2.2]{HW08}, we get that $\Phi_{\bar{\lambda}}^*$ is compact. Hence, using the resolvent formula from Theorem~\ref{theorem_self_adjoint} we conclude that  
  \begin{equation*}
    (A_{\eta, \tau} - \lambda)^{-1} - (A_0 - \lambda)^{-1}
        = -\Phi_\lambda \big( I_4 + (\eta I_4 + \tau \beta) \mathcal{C}_\lambda \big)^{-1} 
            (\eta I_4 + \tau \beta) \Phi_{\bar{\lambda}}^*
  \end{equation*}
  is compact in $L^2(\mathbb{R}^3)^4$. Therefore, it follows from \cite[Theorem~XIII.14]{RS78} that
  $\sigma_{\textup{ess}}(A_{\eta, \tau}) = \sigma_{\textup{ess}}(A_0) = (-\infty, -m c^2] \cup [m c^2, \infty)$.
  
  The proof of statement~(ii) follows ideas from \cite[Proposition~3.6]{HOP17}.
  We note first that the number of discrete eigenvalues of 
  $A_{\eta, \tau}$ in the gap $(-m c^2, m c^2)$ is equal to the number of 
  eigenvalues of $(A_{\eta, \tau})^2$ below the threshold of its essential spectrum $(m c^2)^2$.
  Let us denote the quadratic form associated to $(A_{\eta, \tau})^2$ by $\mathfrak{a}$.
  Then for any $f = f_+ \oplus f_- \in \dom A_{\eta, \tau} = \dom \mathfrak{a}$
  \begin{equation*}
    \begin{split}
      \mathfrak{a}[f] &= \| A_{\eta, \tau} f \|_{L^2(\mathbb{R}^3)^4}^2 \\
      &= \big\| (-i c \alpha \cdot \nabla + m c^2 \beta) f_+ \big\|_{L^2(\Omega_+)^4}^2
        + \big\| (-i c \alpha \cdot \nabla + m c^2 \beta) f_- \big\|_{L^2(\Omega_-)^4}^2 \\
      &= \| c (\alpha \cdot \nabla) f_+ \|_{L^2(\Omega_+)^4}^2 + \| c (\alpha \cdot \nabla) f_- \|_{L^2(\Omega_-)^4}^2
          + (m c^2)^2 \| f \|_{L^2(\mathbb{R}^3)^4}^2 \\
      &\qquad + ( -i c \alpha \cdot \nabla f_+, m c^2 \beta f_+ )_{L^2(\Omega_+)^4} 
          + ( m c^2 \beta f_+, -i c \alpha \cdot \nabla f_+ )_{L^2(\Omega_+)^4} \\
      &\qquad + (- i c \alpha \cdot \nabla f_-, m c^2 \beta f_- )_{L^2(\Omega_-)^4} 
          + ( m c^2 \beta f_-, -i c \alpha \cdot \nabla f_- )_{L^2(\Omega_-)^4}
    \end{split}
  \end{equation*}
  holds. Employing integration by parts and \eqref{equation_anti_commutation} we see that
  \begin{equation*}
    \begin{split}
     ( -i c \alpha \cdot \nabla f_\pm, m c^2 \beta f_\pm )_{L^2(\Omega_\pm)^4} 
          &+ ( m c^2 \beta f_\pm, -i c \alpha \cdot \nabla f_\pm )_{L^2(\Omega_\pm)^4} \\
          &= \mp (i c \alpha \cdot \nu f_\pm|_\Sigma, m c^2 \beta f_\pm|_\Sigma)_{L^2(\Sigma)^4},
    \end{split}
  \end{equation*}
  which yields then
  \begin{equation*} %\label{equation_form_noncritical}
    \begin{split}
      \mathfrak{a}[f] 
        &= \| c (\alpha \cdot \nabla) f \|_{L^2(\Omega_+ \cup \Omega_-)^4}^2 + (m c^2)^2 \| f \|_{L^2(\mathbb{R}^3)^4}^2 \\
        &\quad  - (i c \alpha \cdot \nu f_+|_\Sigma, m c^2 \beta f_+|_\Sigma)_{L^2(\Sigma)^4}
          + (i c \alpha \cdot \nu f_-|_\Sigma, m c^2 \beta f_-|_\Sigma)_{L^2(\Sigma)^4}.
    \end{split}
  \end{equation*}
  To proceed choose $R > 0$ such that $\Sigma \subset B(0, R)$
  and define the closed and semibounded sesquilinear forms $\mathfrak{b}_\text{int}$ and $\mathfrak{b}_\text{ext}$ by
  \begin{equation*}
    \begin{split}
      \mathfrak{b}_{\textup{int}}[f] &:=  \| c (\alpha \cdot \nabla) f\|_{L^2(\Omega_+ \cup (\Omega_- \cap B(0, R)))^4}^2
          + (m c^2)^2\| f\|_{L^2(B(0, R))^4}^2 \\
      &\quad - (i c \alpha \cdot \nu f_+|_\Sigma, m c^2 \beta f_+|_\Sigma)_{L^2(\Sigma)^4}
          + (i c \alpha \cdot \nu f_-|_\Sigma, m c^2 \beta f_-|_\Sigma)_{L^2(\Sigma)^4}, \\
      \dom \mathfrak{b}_{\textup{int}}
          &:= \bigg\{ f = f_+ \oplus f_- \in H^1(\Omega_+)^4 \oplus H^1(\Omega_- \cap B(0, R))^4: \\
      &\qquad \quad i c (\alpha \cdot \nu) (f_+|_\Sigma - f_-|_\Sigma) 
          = -\frac{1}{2} (\eta I_4 + \tau \beta) (f_+|_\Sigma + f_-|_\Sigma) \bigg\},
    \end{split}
  \end{equation*}
  and
  \begin{equation*}
    \begin{split}
      \mathfrak{b}_{\textup{ext}}[f] &:= \| c (\alpha \cdot \nabla) f\|_{L^2(\mathbb{R}^3 \setminus B(0, R))^4}^2
          + (m c^2)^2\| f\|_{L^2(\mathbb{R}^3 \setminus B(0, R))^4}^2, \\
      \dom \mathfrak{b}_{\textup{ext}} &:= H^1(\mathbb{R}^3 \setminus B(0, R))^4.
    \end{split}
  \end{equation*}
  Then $\mathfrak{a}$
  is minorated in the sense of closed quadratic forms by $\mathfrak{b} := \mathfrak{b}_{\textup{int}} \oplus \mathfrak{b}_{\textup{ext}}$, that means $\dom \mathfrak{a} \subset \dom \mathfrak{b}$ and $\mathfrak{b}[f] \leq \mathfrak{a}[f]$ for all $f \in \dom \mathfrak{a}$. By the min-max principle this implies that, if the operator associated 
  to $\mathfrak{b}$ has finitely many eigenvalues below $(m c^2)^2$, then $(A_{\eta, \tau})^2$
  has only finitely many eigenvalues below $(m c^2)^2$.
  
  Clearly, the operator $B_{\textup{ext}}$ associated to $\mathfrak{b}_{\textup{ext}}$ is bounded from below by $B_{\textup{ext}} \geq (m c^2)^2$. Thus, the number of eigenvalues of $(A_{\eta, \tau})^2$
  below $(m c^2)^2$ is less or equal to the number of eigenvalues of the operator $B_{\textup{int}}$ 
  associated to the semibounded and closed form $\mathfrak{b}_{\textup{int}}$, 
  compare for instance \cite[Section XIII.15]{RS78} for a similar argument.
  Moreover, as $\dom \mathfrak{b}_{\textup{int}} \subset H^1(\Omega_+)^4 \oplus H^1(\Omega_- \cap B(0, R))^4$
  is compactly embedded in $L^2(B(0, R))^4$, cf. \cite[Theorem~3.27]{M00}, it follows that the resolvent of 
  $B_{\textup{int}}$ is compact.
  Therefore, the spectrum of $B_{\textup{int}}$ is purely discrete and consists of eigenvalues that 
  accumulate only at $\infty$, as $B_{\textup{int}}$ is bounded from below. 
  Thus $B_{\textup{int}}$ has only finitely many eigenvalues below $(m c^2)^2$. Hence, also the operator associated to
  $\mathfrak{b}$ has only finitely many eigenvalues below $(m c^2)^2$. This shows finally that
  $(A_{\eta, \tau})^2$ has only finitely many eigenvalues below $(m c^2)^2$
  which finishes the proof of assertion~(ii). 
  
  Finally, item~(iii) is just a simple consequence of the Birman-Schwinger principle in Lemma~\ref{lemma_Birman_Schwinger}~(ii)
  and~\eqref{C_lambda_uniformly_bounded}.
\end{proof}

As it is often the case for Dirac operators we also have several symmetry relations for the spectrum of $A_{\eta, \tau}$. These symmetries are consequences of commutation relations of $A_{\eta, \tau}$ with the charge conjugation, the time reversal and a suitable unitary operator. We would like to note that item~(i) in the proposition below can also be shown with the aid of the Birman-Schwinger principle from Lemma~\ref{lemma_Birman_Schwinger}, cf. the proof of \cite[Theorem~3.3]{AMV15} for the purely electrostatic case. The presentation below follows \cite[Theorem~2.3]{HOP17}, where the special case of purely scalar interactions is treated.

\begin{prop} \label{proposition_discrete_spectrum}
  Let $\eta, \tau \in \mathbb{R}$ such that
  $\eta^2 - \tau^2 \neq 4 c^2$. Then the following is true.
  \begin{itemize}
    \item[$\textup{(i)}$] Assume $\eta^2 \neq \tau^2$. Then $\lambda \in \sigma_{\textup{p}}
    \big(A_{-4c^2 \eta/(\eta^2 - \tau^2), -4 c^2 \tau/(\eta^2 - \tau^2)}\big)$ if and only if $\lambda \in \sigma_{\textup{p}}(A_{\eta, \tau})$.
    \item[$\textup{(ii)}$] $\lambda \in \sigma_{\textup{p}}(A_{\eta, \tau})$ has always even multiplicity.
    \item[$\textup{(iii)}$] $\lambda \in \sigma_{\textup{p}}(A_{\eta, \tau})$ if and only if
    $-\lambda \in \sigma_{\textup{p}}(A_{-\eta, \tau})$.
%    \item[$\textup{(ii)}$] There exists some $K > 4 c^2$ such that $\sigma_{\textup{p}}(A_{\eta, \tau}) = \emptyset$,
%    if $|\eta + \tau| \geq K$ and $|\eta - \tau| \geq K$.
  \end{itemize}
\end{prop}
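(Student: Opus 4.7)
The plan is to prove the three items by three distinct tools, following the scheme of \cite[Theorem~2.3]{HOP17}: charge conjugation for~(iii), time reversal (Kramers degeneracy) for~(ii), and the Birman--Schwinger principle of Lemma~\ref{lemma_Birman_Schwinger} together with the algebraic structure of Proposition~\ref{proposition_mapping_properties} for~(i).

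For item~(iii) I will introduce the antiunitary operator $\mathcal{C} := i\beta\alpha_2 \mathcal{K}$ on $L^2(\mathbb{R}^3)^4$, where $\mathcal{K}$ denotes componentwise complex conjugation. A short calculation based on~\eqref{equation_anti_commutation} and the reality of $\alpha_1, \alpha_3, \beta$ (with $\alpha_2$ purely imaginary) yields $\mathcal{C}\alpha_j\mathcal{C}^{-1} = \alpha_j$ for $j=1,2,3$ and $\mathcal{C}\beta\mathcal{C}^{-1} = -\beta$. Combined with antilinearity, these imply that $\mathcal{C}$ maps $\dom A_{\eta, \tau}$ bijectively onto $\dom A_{-\eta, \tau}$ (the coupling condition with $\eta I_4 + \tau\beta$ becomes that with $-\eta I_4 + \tau\beta$, up to an overall sign that does not affect the equality to zero) and that $\mathcal{C}A_{\eta, \tau} = -A_{-\eta, \tau}\mathcal{C}$. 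Hence, starting from $A_{\eta, \tau}f = \lambda f$ with $\lambda$ real, antiunitarity gives $A_{-\eta, \tau}(\mathcal{C}f) = -\lambda\,\mathcal{C}f$, and the converse follows by applying the same argument to $\mathcal{C}^{-1}$.

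For item~(ii) I will use the Kramers antiunitary $\mathcal{T} := i\alpha_1\alpha_3 \mathcal{K}$. A direct computation using~\eqref{equation_anti_commutation} gives $\mathcal{T}^2 = -I$, $\mathcal{T}\alpha_j\mathcal{T}^{-1} = -\alpha_j$ for $j=1,2,3$, and $\mathcal{T}\beta\mathcal{T}^{-1} = \beta$. Together with antilinearity these identities ensure that $\mathcal{T}$ preserves both the differential action of $A_{\eta, \tau}$ off $\Sigma$ and the coupling condition at $\Sigma$, so $\mathcal{T}A_{\eta, \tau} = A_{\eta, \tau}\mathcal{T}$. If $f$ is an eigenvector of $A_{\eta, \tau}$ at $\lambda$, then $\mathcal{T}f$ is again an eigenvector at the same $\lambda$, and $\mathcal{T}^2 = -I$ prevents $\mathcal{T}f$ from being a scalar multiple of $f$; iterating the pairing $g \mapsto \mathcal{T}g$ on orthogonal complements of the finite-dimensional $\lambda$-eigenspace forces its complex dimension to be even.

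Item~(i) is the most subtle. Applying Lemma~\ref{lemma_Birman_Schwinger}~(ii) to $A_{\eta, \tau}$ and $A_{\eta', \tau'}$ reduces the claim to the equivalence $\ker(I_4 + M_\lambda) \neq \{0\} \Leftrightarrow \ker(I_4 + M'_\lambda) \neq \{0\}$, where $M_\lambda := (\eta I_4 + \tau\beta)\mathcal{C}_\lambda$ and $M'_\lambda := (\eta' I_4 + \tau'\beta)\mathcal{C}_\lambda$; inspection of the formulas for $(\eta', \tau')$ shows that $M'_\lambda = \mu\, M_\lambda$ with $\mu := -4c^2/(\eta^2 - \tau^2)$. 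Using Proposition~\ref{proposition_mapping_properties} together with the matrix identity $(\eta I_4 + \tau\beta)(\eta I_4 - \tau\beta) = \eta^2 - \tau^2$, one establishes
\[
  M_\lambda^2 = \frac{\eta^2 - \tau^2}{4c^2}\, I_4 + \mathcal{K}_\lambda
\]
with $\mathcal{K}_\lambda$ compact, and in particular, since $\mu\cdot\frac{\eta^2-\tau^2}{4c^2} = -1$, one has $M_\lambda M'_\lambda = M'_\lambda M_\lambda = -I_4 + \mu\mathcal{K}_\lambda$. The noncriticality assumption $\eta^2 - \tau^2 \neq 4c^2$ ensures that both $I_4 + M_\lambda$ and $I_4 + M'_\lambda$ are Fredholm of index zero and commute, and the identity $\mu c = -1$ with $c = (\eta^2-\tau^2)/(4c^2)$ yields the direct-sum decomposition $\ker\big((I_4 + M_\lambda)(I_4 + M'_\lambda)\big) = \ker(I_4 + M_\lambda) \oplus \ker(I_4 + M'_\lambda)$. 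The main obstacle will be the closing step: translating the ``almost inverse'' relation $M_\lambda M'_\lambda = -I_4 + \text{compact}$ into the precise equivalence of nontriviality of the two kernels (which correspond to the \emph{distinct} eigenvalues $-1$ and $-1/\mu$ of $M_\lambda$). I expect this to be accomplished via a Fredholm-index computation exploiting the commutation and the spectral-invariance structure of $M_\lambda$ coming from the displayed algebraic identity.
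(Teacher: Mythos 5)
Parts (ii) and (iii) are correct and are essentially the paper's argument. Your charge conjugation $\mathcal{C}=i\beta\alpha_2\mathcal{K}$ is literally the operator $C$ used in the paper, and your time-reversal $\mathcal{T}=i\alpha_1\alpha_3\mathcal{K}$ differs from the paper's $T=-i\gamma_5\alpha_2\mathcal{K}$ only by a phase (since $-i\gamma_5\alpha_2=\alpha_1\alpha_3$), so the Kramers degeneracy argument is the same. One small remark on (ii): the paper contents itself with producing, for each eigenfunction $f$, an orthogonal eigenfunction $Tf$; your iterated-complement argument to conclude even \emph{dimension} is a welcome tightening, and it is correct because the orthogonal complement of $\mathrm{span}\{f,\mathcal{T}f\}$ inside the eigenspace is again $\mathcal{T}$-invariant.

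Part (i) is where your plan breaks down, and you have in fact flagged the weak spot yourself. The reduction via Lemma~\ref{lemma_Birman_Schwinger}~(ii) turns the claim into
\begin{equation*}
  -1\in\sigma_{\mathrm p}(M_\lambda)\quad\Longleftrightarrow\quad \tfrac{\eta^2-\tau^2}{4c^2}\in\sigma_{\mathrm p}(M_\lambda),
\end{equation*}
which you then try to extract from $M_\lambda^2=\tfrac{\eta^2-\tau^2}{4c^2}I_4+\mathcal{K}_\lambda$ with $\mathcal{K}_\lambda$ compact together with Fredholm/index considerations. This cannot work at the level of abstraction you invoke: the relation $M^2=cI+K$, $K$ compact, does \emph{not} imply $\ker(I+M)\neq\{0\}\Leftrightarrow\ker(cI-M)\neq\{0\}$. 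For instance, on $\ell^2$ take $M=\mathrm{diag}(-1,2,2,2,\dots)$; then $M^2=\mathrm{diag}(1,4,4,\dots)=4I+K$ with $K$ rank one, $-1\in\sigma_{\mathrm p}(M)$, but $4\notin\sigma_{\mathrm p}(M)$. The ``direct-sum decomposition'' you announce only shows the two kernels are linearly disjoint; it says nothing about one being nontrivial whenever the other is, let alone that their dimensions agree. A Fredholm-index computation does not help either, since $I_4+M_\lambda$ and $I_4+M_\lambda'$ both have index zero regardless. A secondary issue is that Lemma~\ref{lemma_Birman_Schwinger}~(ii) applies only for $\lambda\in\rho(A_0)$, so even if the Birman--Schwinger route were made to work it would not cover possible embedded eigenvalues, whereas the claim is stated for all $\lambda$.

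The paper avoids all of this by an elementary direct transformation of eigenfunctions: if $A_{\eta,\tau}f=\lambda f$ with $f=f_+\oplus f_-$, set $g:=f_+\oplus(-f_-)$. A three-line manipulation of the coupling condition~\eqref{equation_jump_condition_intro}, using $(\eta I_4+\tau\beta)^{-1}=\tfrac{1}{\eta^2-\tau^2}(\eta I_4-\tau\beta)$ and $(\alpha\cdot\nu)^2=I_4$, shows $g\in\dom A_{\eta',\tau'}$ with $(\eta',\tau')=\bigl(-4c^2\eta/(\eta^2-\tau^2),\,-4c^2\tau/(\eta^2-\tau^2)\bigr)$ and $A_{\eta',\tau'}g=\lambda g$. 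Since $f\mapsto g$ is a linear involution, eigenspaces correspond bijectively, which gives both the equivalence and, for free, equality of multiplicities. You should replace your sketch of (i) by this argument (or by any equivalent exploitation of the specific structure of $M_\lambda$ beyond its algebraic square, for instance the identity $\psi=2ic(\alpha\cdot\nu)\mathcal{C}_\lambda\varphi$ relating the Birman--Schwinger eigenvectors, which is what the $g=f_+\oplus(-f_-)$ trick encodes at the boundary).
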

\begin{proof}
  (i) Assume that $f = f_+ \oplus f_-$ is an eigenfunction of $A_{\eta, \tau}$ for the eigenvalue~$\lambda$. Then the function $g := f_+ \oplus (-f_-) \in H^1(\Omega_+)^4 \oplus H^1(\Omega_-)^4$ fulfils
  \begin{equation*}
    \begin{split}
      -i c \alpha \cdot \nu &(g_+|_\Sigma + g_-|_\Sigma) = -i c \alpha \cdot \nu (f_+|_\Sigma - f_-|_\Sigma) \\
        &= \frac{1}{2} (\eta I_4 + \tau \beta) (f_+ |_\Sigma + f_-|_\Sigma)
        = \frac{1}{2} (\eta I_4 + \tau \beta) (g_+ |_\Sigma - g_-|_\Sigma),
    \end{split}
  \end{equation*}
  as $f \in \dom A_{\eta, \tau}$. A multiplication of the last equation with the constant matrix $(\eta I_4 + \tau \beta)^{-1} = \frac{1}{\eta^2 - \tau^2} (\eta I_4 - \tau \beta)$ yields
  \begin{equation*}
      - \frac{i c}{\eta^2 - \tau^2} (\eta I_4 - \tau \beta) \alpha \cdot \nu (g_+|_\Sigma + g_-|_\Sigma) 
        = \frac{1}{2} (g_+ |_\Sigma - g_-|_\Sigma).
  \end{equation*}
  By using the anti-commutation relation~\eqref{equation_anti_commutation} and multiplying this equation then with $- 2 i c \alpha \cdot \nu$ one easily sees that 
  \begin{equation*}
      - \frac{1}{2} \frac{4 c^2}{\eta^2 - \tau^2} (\eta I_4 + \tau \beta) (g_+|_\Sigma + g_-|_\Sigma) 
        = -i c \alpha \cdot \nu (g_+ |_\Sigma - g_-|_\Sigma),
  \end{equation*}
  which shows that $g \in \dom A_{-4c^2 \eta/(\eta^2 - \tau^2), -4 c^2 \tau/(\eta^2 - \tau^2)}$. Finally, since $f$ is an eigenfunction of $A_{\eta, \tau}$ corresponding to $\lambda$ one deduces immediately that also 
  \begin{equation*}
    A_{-4c^2 \eta/(\eta^2 - \tau^2), -4 c^2 \tau/(\eta^2 - \tau^2)} g = \lambda g,
  \end{equation*}
  which shows item (i).

  For the proof of statement~(ii) we define the (nonlinear) time reversal operator
  \begin{equation*}
    T f := -i \gamma_5 \alpha_2 \overline{f}, \qquad f \in L^2(\mathbb{R}^3)^4, \quad 
    \gamma_5 := \begin{pmatrix} 0 & I_2 \\ I_2 & 0 \end{pmatrix}.
  \end{equation*}
  Note that $\beta \gamma_5 = - \gamma_5 \beta$ and $(\alpha \cdot x) \gamma_5 = \gamma_5 (\alpha \cdot x)$
  for any $x \in \mathbb{R}^3$. First we show that $f \in \dom A_{\eta, \tau}$ implies
  $T f \in \dom A_{\eta, \tau}$. Indeed, if one takes for $f \in \dom A_{\eta, \tau}$ the complex conjugate of the coupling condition~\eqref{equation_jump_condition_intro} and multiplies this equation with the matrix $-i \gamma_5 \alpha_2$ we deduce
  \begin{equation*}
    -i \gamma_5 \alpha_2 i c \overline{\alpha} \cdot \nu(\overline{f_+}|_\Sigma - \overline{f_-}|_\Sigma)
      = -\frac{i}{2} \gamma_5 \alpha_2 (\eta I_4 + \tau \beta) (\overline{f_+}|_\Sigma + \overline{f_-}|_\Sigma).
  \end{equation*}
  Using $\overline{\alpha_2} = -\alpha_2$ (where the complex conjugate is understood component wise) and~\eqref{equation_anti_commutation} we deduce from the last equality that also $T f$ satisfies \eqref{equation_jump_condition_intro} and hence $T f \in \dom A_{\eta, \tau}$.
  
  Employing again $\overline{\alpha_2} = -\alpha_2$ one finds $T^2 f= -f$. Furthermore, using
  \eqref{equation_anti_commutation} we get 
  \begin{equation} \label{equation_time_reversal}
    \begin{split}
      (-i c &\alpha \cdot \nabla + m c^2 \beta) T f 
          = (-i c \alpha \cdot \nabla + m c^2 \beta) ( -i \gamma_5 \alpha_2 \overline{f}) \\
      &=  -i \gamma_5 \alpha_2 (i c \overline{\alpha} \cdot \nabla + m c^2 \beta) \overline{f}
          = T \big((-i c \alpha \cdot \nabla + m c^2 \beta) f \big).
    \end{split}
  \end{equation}
  This shows $A_{\eta, \tau} T f = T A_{\eta, \tau} f$ for $f \in \dom A_{\eta, \tau}$. Another calculation using again $\overline{\alpha_2} = -\alpha_2$ gives 
  $\langle -i \gamma_5 \alpha_2 \overline{f}, f \rangle_{\mathbb{C}^4} 
  = \overline{\langle f, i \gamma_5 \alpha_2 \overline{f} \rangle_{\mathbb{C}^4}}$ which implies
  \begin{equation*}
    (T f, f)_{L^2(\mathbb{R}^3)^4} = \int_{\mathbb{R}^3} T f(x) \cdot \overline{f(x)} \text{d} x = 0.
  \end{equation*}
  Hence, if $f$ is an eigenfunction of $A_{\eta, \tau}$, then also
  $T f \in \dom A_{\eta, \tau}$ is a linearly independent and non-trivial eigenfunction of $A_{\eta, \tau}$ for the same eigenvalue.  
  Therefore, also assertion~(ii) is proven.

  Eventually, to prove statement~(iii) we introduce the (nonlinear) charge conjugation operator
  \begin{equation*}
    C f := i \beta \alpha_2 \overline{f}, \qquad f \in L^2(\mathbb{R}^3)^4.
  \end{equation*}
  A simple calculation similar as above shows $C^2 f = f$. Moreover,
  it is not difficult to see that $f \in \dom A_{\eta, \tau}$ 
  if and only if $C f \in \dom A_{-\eta, \tau}$. Finally, a similar calculation as 
  in~\eqref{equation_time_reversal} shows
  \begin{equation*}
    (-i c \alpha \cdot \nabla + m c^2 \beta) C f = -C (-i c \alpha \cdot \nabla + m c^2 \beta) f.
  \end{equation*}
  Hence, we deduce $f \in \dom A_{\eta, \tau}$ fulfils $A_{\eta, \tau} f = \lambda f$ if and only if $Cf \in \dom A_{-\eta, \tau}$ and $A_{-\eta, \tau} C f = -\lambda C f$.
  This yields then the claim of item~(iii).
\end{proof}

By combining Theorem~\ref{theorem_spectral_properties}~(iii) with Proposition~\ref{proposition_discrete_spectrum} we find that $A_{\eta, \tau}$ does not have discrete eigenvalues also for large interaction strengths. This is in contrast to what is known for Schr\"odinger operators with singular $\delta$-potentials. For the nonrelativistic Hamiltonians with attractive $\delta$-interactions in~$\mathbb{R}^3$ there are no eigenvalues for small interaction strengths~\cite{EF09}, but always eigenvalues for large values of the interaction strength~\cite{E03}. The difference is obviously due to the presence of the `lower continuum' for the Dirac operator.

\begin{cor} \label{corollary_no_discrete_spectrum}
  Let $\eta, \tau \in \mathbb{R}$ such that $\eta^2 - \tau^2 \notin \{ 0, 4 c^2\}$ and let $K$ be the same constant as in Theorem~\ref{theorem_spectral_properties}~(iii).
  Then $\sigma_{\textup{disc}}(A_{\eta, \tau}) = \emptyset$, if $| \eta + \tau | > \frac{4 c^2}{K}$ and $| \eta - \tau | > \frac{4 c^2}{K}$.
\end{cor}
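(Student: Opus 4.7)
The plan is to leverage the duality from Proposition~\ref{proposition_discrete_spectrum}~(i), which identifies the point spectra of $A_{\eta,\tau}$ and $A_{\eta',\tau'}$ with $\eta' := -4c^2 \eta/(\eta^2-\tau^2)$ and $\tau' := -4c^2 \tau/(\eta^2-\tau^2)$, and then apply the small-coupling result Theorem~\ref{theorem_spectral_properties}~(iii) on the dual side. The condition $\eta^2 - \tau^2 \notin \{0, 4c^2\}$ is precisely what is needed to make both $A_{\eta,\tau}$ self-adjoint (via Theorem~\ref{theorem_self_adjoint}) and to make the proposition applicable.

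First I would compute, using $\eta^2 \neq \tau^2$, that
\begin{equation*}
  \eta' + \tau' = \frac{-4c^2(\eta+\tau)}{(\eta+\tau)(\eta-\tau)} = \frac{-4c^2}{\eta-\tau}, \qquad
  \eta' - \tau' = \frac{-4c^2}{\eta+\tau}.
\end{equation*}
Under the hypothesis $|\eta+\tau| > 4c^2/K$ and $|\eta-\tau| > 4c^2/K$ this gives $|\eta'+\tau'| < K$ and $|\eta'-\tau'| < K$. Next, I would verify that the dual parameters satisfy $(\eta')^2 - (\tau')^2 \neq 4c^2$: a direct calculation yields $(\eta')^2 - (\tau')^2 = 16 c^4/(\eta^2 - \tau^2)$, and equality with $4c^2$ would force $\eta^2 - \tau^2 = 4c^2$, which is excluded by assumption. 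Hence $A_{\eta',\tau'}$ is self-adjoint by Theorem~\ref{theorem_self_adjoint} and Theorem~\ref{theorem_spectral_properties}~(iii) applies to it, yielding $\sigma_{\textup{disc}}(A_{\eta',\tau'}) = \emptyset$.

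Finally, I would combine these facts: since by Theorem~\ref{theorem_spectral_properties}~(i) both operators share the essential spectrum $(-\infty,-mc^2]\cup[mc^2,\infty)$, any element of $\sigma_{\textup{disc}}(A_{\eta,\tau})$ lies in the gap $(-mc^2, mc^2) \subset \rho(A_0)$, and by Proposition~\ref{proposition_discrete_spectrum}~(i) the same $\lambda$ belongs to $\sigma_{\textup{p}}(A_{\eta',\tau'})$. Being in the gap, it is isolated from $\sigma_{\textup{ess}}(A_{\eta',\tau'})$ and of finite multiplicity (by Theorem~\ref{theorem_spectral_properties}~(ii) applied to $A_{\eta',\tau'}$), hence it lies in $\sigma_{\textup{disc}}(A_{\eta',\tau'})$. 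But that set is empty, giving $\sigma_{\textup{disc}}(A_{\eta,\tau}) = \emptyset$.

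There is no real obstacle in the argument; the only things to be careful about are the algebraic identities for $\eta' \pm \tau'$ and checking that the noncriticality of $(\eta,\tau)$ transfers to $(\eta',\tau')$ so that Theorem~\ref{theorem_spectral_properties} is indeed applicable on the dual side.
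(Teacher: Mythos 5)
Your proof is correct and is exactly the argument the paper has in mind: the paper's own one-line justification (just above the corollary) is precisely to combine Theorem~\ref{theorem_spectral_properties}~(iii) with the duality in Proposition~\ref{proposition_discrete_spectrum}~(i), and your algebraic computation of $\eta'\pm\tau'$ and the check that noncriticality passes to the dual parameters supply the details left implicit there. The only superfluous step is the appeal to Theorem~\ref{theorem_spectral_properties}~(ii): since $\sigma(A_{\eta',\tau'})$ decomposes disjointly into $\sigma_{\textup{ess}}\cup\sigma_{\textup{disc}}$, a point eigenvalue in the gap is automatically discrete once the essential spectrum has been identified.
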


Theorem~\ref{theorem_spectral_properties}, Lemma~\ref{lemma_Birman_Schwinger}, and Proposition~\ref{proposition_discrete_spectrum} give a detailed picture of the spectral properties of $A_{\eta, \tau}$. For purely electrostatic and purely Lorentz scalar interactions, which are the most interesting ones of the potentials considered here for applications in relativistic quantum mechanics, many of these findings simplify significantly; hence, we summarize the spectral properties for these two important cases in the following corollaries. We start with the purely electrostatic case:

\begin{cor} \label{corollary_electrostatic_interaction}
  Let $\eta \in \mathbb{R} \setminus \{ \pm 2c \}$. Then the following assertions hold.
  \begin{itemize}
    \item[$\textup{(i)}$] For $\lambda \in \mathbb{C} \setminus \mathbb{R}$ the resolvent of $A_{\eta, 0}$
    is given by
    \begin{equation*}
      \begin{split}
        (A_{\eta, 0}-\lambda)^{-1} = (A_0 - \lambda)^{-1} 
            - \Phi_\lambda \big( I_4 + \eta \mathcal{C}_\lambda \big)^{-1} \eta \Phi_{\bar{\lambda}}^*.
      \end{split}
    \end{equation*}
    \item[$\textup{(ii)}$] $\sigma_{\textup{ess}}(A_{\eta, 0}) = (-\infty, -m c^2] \cup [m c^2, \infty)$.
    \item[$\textup{(iii)}$] $\lambda \in \sigma_{\textup{p}}(A_{\eta, 0}) \cap (-mc^2, mc^2)$ if and only if
    $-1 \in \sigma_{\textup{p}}(\eta \mathcal{C}_\lambda)$.
    \item[$\textup{(iv)}$] If $\eta \neq 0$, then
    $\lambda \in \sigma_{\textup{p}}(A_{\eta, 0})$ if and only if
    $\lambda \in \sigma_{\textup{p}}\big(A_{-4c^2/\eta, 0}\big)$.
    \item[$\textup{(v)}$] $\lambda \in \sigma_{\textup{p}}(A_{\eta, 0})$ if and only if
    $-\lambda \in \sigma_{\textup{p}}\big(A_{-\eta, 0}\big)$.
    \item[$\textup{(vi)}$] $\sigma_{\textup{disc}}(A_{\eta, 0})$ is finite.
    \item[$\textup{(vii)}$] Eigenvalues of $A_{\eta, 0}$ have always even multiplicity.
    \item[$\textup{(viii)}$] There exists a constant $K > 0$ such that $\sigma_{\textup{disc}}(A_{\eta, 0}) = \emptyset$,
    if $| \eta| < K$ or $|\eta| > \frac{4 c^2}{K}$.
  \end{itemize} 
\end{cor}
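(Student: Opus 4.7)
The strategy is to specialize the general results of Theorem~\ref{theorem_self_adjoint}, Lemma~\ref{lemma_Birman_Schwinger}, Theorem~\ref{theorem_spectral_properties}, Proposition~\ref{proposition_discrete_spectrum}, and Corollary~\ref{corollary_no_discrete_spectrum} to the purely electrostatic setting $\tau = 0$. Since the noncriticality assumption $\eta^2 - \tau^2 \neq 4c^2$ becomes $\eta \neq \pm 2c$, which is precisely the hypothesis $\eta \in \mathbb{R} \setminus \{\pm 2c\}$, every cited result applies.

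First, assertions (i), (ii), (iii), (vi), and (vii) are immediate specializations: setting $\tau = 0$ in Theorem~\ref{theorem_self_adjoint}, in Theorem~\ref{theorem_spectral_properties}(i), in Lemma~\ref{lemma_Birman_Schwinger}(ii), in Theorem~\ref{theorem_spectral_properties}(ii), and in Proposition~\ref{proposition_discrete_spectrum}(ii), respectively, yields each of these statements directly. For (v), Proposition~\ref{proposition_discrete_spectrum}(iii) at $\tau = 0$ gives $\lambda \in \sigma_{\textup{p}}(A_{\eta,0})$ iff $-\lambda \in \sigma_{\textup{p}}(A_{-\eta,0})$, exactly as claimed.

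Statement (iv) requires a small computation: when $\tau = 0$ and $\eta \neq 0$, the parameter in Proposition~\ref{proposition_discrete_spectrum}(i) becomes
\begin{equation*}
  -\frac{4c^2 \eta}{\eta^2 - \tau^2} = -\frac{4c^2 \eta}{\eta^2} = -\frac{4c^2}{\eta},
\end{equation*}
and the second parameter $-4c^2 \tau/(\eta^2 - \tau^2)$ vanishes. Hence the dual operator is $A_{-4c^2/\eta, 0}$, giving the claimed equivalence $\lambda \in \sigma_{\textup{p}}(A_{\eta,0}) \Leftrightarrow \lambda \in \sigma_{\textup{p}}(A_{-4c^2/\eta, 0})$. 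Note that noncriticality $\eta \neq \pm 2c$ of $A_{\eta,0}$ corresponds to noncriticality $-4c^2/\eta \neq \pm 2c$ of $A_{-4c^2/\eta, 0}$, so both operators are self-adjoint by Theorem~\ref{theorem_self_adjoint}.

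Finally, assertion (viii) combines two regimes. For small $|\eta|$, Theorem~\ref{theorem_spectral_properties}(iii) with $\tau = 0$ produces a constant $K > 0$ such that $\sigma_{\textup{disc}}(A_{\eta,0}) = \emptyset$ whenever $|\eta| < K$. For large $|\eta|$, one can either invoke Corollary~\ref{corollary_no_discrete_spectrum} directly or, equivalently, combine the small-$|\eta|$ result with the duality (iv): if $|\eta| > 4c^2/K$ then $|-4c^2/\eta| < K$, so $\sigma_{\textup{disc}}(A_{-4c^2/\eta, 0}) = \emptyset$, and (iv) transports this to $A_{\eta,0}$. Since all statements are direct consequences of earlier theorems, no genuine obstacle arises; the only point that deserves care is the case distinction in (iv) and (viii), where the hypothesis $\eta \neq 0$ must be handled (the case $\eta = 0$ corresponds to the free Dirac operator $A_0$, which trivially has empty discrete spectrum so that (viii) for small $|\eta|$ is vacuous there).
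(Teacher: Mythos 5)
Your proposal is correct and follows exactly the route the paper intends: Corollary \ref{corollary_electrostatic_interaction} is stated without a separate proof precisely because it is, as you carry out, the specialization $\tau = 0$ of Theorem~\ref{theorem_self_adjoint}, Lemma~\ref{lemma_Birman_Schwinger}, Theorem~\ref{theorem_spectral_properties}, Proposition~\ref{proposition_discrete_spectrum}, and Corollary~\ref{corollary_no_discrete_spectrum}. Your verification that noncriticality is preserved under the duality $\eta \mapsto -4c^2/\eta$, and your remark that in (iii) the set $\rho(A_0) \cap \sigma_{\textup{p}}(A_{\eta,0})$ reduces to $\sigma_{\textup{p}}(A_{\eta,0}) \cap (-mc^2, mc^2)$ because $A_{\eta,0}$ is self-adjoint, are exactly the small points one should not elide.
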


Next, let us discuss Dirac operators with purely Lorentz scalar $\delta$-shell interactions,
that means we assume $\eta = 0$. 
Note that in this case there is no critical interaction strength, as 
$-\tau^2 \neq 4 c^2$ always in this case. 
On the other hand we have confinement for $\tau = \pm 2 c$, compare Lemma~\ref{lemma_transmission_condition}.
% For purely scalar interactions many of the spectral properties of
% $A_{\eta, \tau}$ from Theorem~\ref{theorem_spectral_properties} simplify
% and we have some additional interesting symmetry relations in the spectrum.
Note that most of the results below are also formulated and proved in \cite[Theorem~2.3]{HOP17}.

\begin{cor} \label{corollary_scalar_interaction}
  Let $\tau \in \mathbb{R}$. Then the following assertions hold.
  \begin{itemize}
    \item[$\textup{(i)}$] For $\lambda \in \mathbb{C} \setminus \mathbb{R}$ the resolvent of $A_{0, \tau}$
    is given by
    \begin{equation*}
      \begin{split}
        (A_{0, \tau}-\lambda)^{-1} = (A_0 - \lambda)^{-1} 
            - \Phi_\lambda \big( I_4 + \tau \beta \mathcal{C}_\lambda \big)^{-1}
            \tau \beta \Phi_{\bar{\lambda}}^*.
      \end{split}
    \end{equation*}
    \item[$\textup{(ii)}$] $\sigma_{\textup{ess}}(A_{0, \tau}) = (-\infty, -m c^2] \cup [m c^2, \infty)$.
    \item[$\textup{(iii)}$] $\lambda \in \sigma_{\textup{p}}(A_{0, \tau}) \cap(-mc^2, mc^2)$ if and only if
    $-1 \in \sigma_{\textup{p}}(\tau \beta \mathcal{C}_\lambda)$.
    \item[$\textup{(iv)}$] If $\tau \neq 0$, then $\lambda \in \sigma_{\textup{p}}(A_{0, \tau})$ if and only if
    $\lambda \in \sigma_{\textup{p}}\big(A_{0, 4c^2/\tau}\big)$.
    \item[$\textup{(v)}$] $\lambda \in \sigma_{\textup{p}}(A_{0, \tau})$ if and only if $-\lambda \in \sigma_{\textup{p}}(A_{0, \tau})$.
    \item[$\textup{(vi)}$] $\sigma_{\textup{disc}}(A_{0, \tau})$ is finite.
    \item[$\textup{(vii)}$] Eigenvalues of $A_{0, \tau}$ have always even multiplicity.
    \item[$\textup{(viii)}$] There exists a constant $K > 0$ such that $\sigma_{\textup{disc}}(A_{0, \tau}) = \emptyset$,
    if $| \tau| < K$ or $|\tau| > \frac{4 c^2}{K}$.
  \end{itemize} 
\end{cor}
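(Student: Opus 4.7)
The plan is to obtain every item of the corollary by specializing the general results of Sections~\ref{section_self_adjoint} and~\ref{section_spectral_properties} to the case $\eta = 0$. The key preliminary observation is that for $\eta = 0$ the noncritical condition $\eta^2 - \tau^2 \neq 4c^2$ becomes $-\tau^2 \neq 4c^2$, which holds for \emph{every} $\tau \in \mathbb{R}$. Consequently the operator $A_{0,\tau}$ is self-adjoint on the domain defined in~\eqref{def_A_eta} by Theorem~\ref{theorem_self_adjoint}, and all results established under the assumption $\eta^2 - \tau^2 \neq 4c^2$ apply uniformly in $\tau$.

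Items~(i), (ii), (iii), (vi), and~(vii) are then immediate. Assertion~(i) is the resolvent formula of Theorem~\ref{theorem_self_adjoint} with the scalar factor $\eta I_4 + \tau \beta$ reduced to $\tau\beta$. Item~(ii) is Theorem~\ref{theorem_spectral_properties}(i) written for $\eta=0$. Item~(iii) is the Birman--Schwinger principle of Lemma~\ref{lemma_Birman_Schwinger}(ii) with the symbol $(\eta I_4 + \tau\beta)\mathcal{C}_\lambda$ replaced by $\tau\beta\mathcal{C}_\lambda$. Item~(vi) is Theorem~\ref{theorem_spectral_properties}(ii), and item~(vii) is Proposition~\ref{proposition_discrete_spectrum}(ii).

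For the symmetries in~(iv) and~(v), I would substitute $\eta=0$ directly into Proposition~\ref{proposition_discrete_spectrum}. In part~(iv), assuming $\tau\neq 0$ so that $\eta^2\neq\tau^2$, the dual coupling constants of Proposition~\ref{proposition_discrete_spectrum}(i) become
\[
  -\frac{4c^2 \eta}{\eta^2-\tau^2}\bigg|_{\eta=0} = 0, \qquad -\frac{4c^2 \tau}{\eta^2-\tau^2}\bigg|_{\eta=0} = \frac{4c^2}{\tau},
\]
yielding the claimed equality of point spectra of $A_{0,\tau}$ and $A_{0,4c^2/\tau}$. In part~(v), Proposition~\ref{proposition_discrete_spectrum}(iii) reads $\lambda\in\sigma_{\textup{p}}(A_{0,\tau})$ iff $-\lambda\in\sigma_{\textup{p}}(A_{-0,\tau})=\sigma_{\textup{p}}(A_{0,\tau})$, so the point spectrum is symmetric about the origin.

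Finally, for~(viii) I would combine Theorem~\ref{theorem_spectral_properties}(iii) with the duality just established. With $K$ as in Theorem~\ref{theorem_spectral_properties}(iii), the smallness conditions $|\eta+\tau|<K$ and $|\eta-\tau|<K$ at $\eta=0$ both reduce to $|\tau|<K$, giving $\sigma_{\textup{disc}}(A_{0,\tau})=\emptyset$ in the small-coupling regime. If instead $|\tau|>4c^2/K$ (so in particular $\tau\neq 0$), then the dual parameter $4c^2/\tau$ satisfies $|4c^2/\tau|<K$, so $\sigma_{\textup{disc}}(A_{0,4c^2/\tau})=\emptyset$, and part~(iv) transfers this emptiness back to $A_{0,\tau}$. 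The only mildly delicate point is keeping track of the $\tau=0$ case in~(viii), which is harmless since it corresponds to the free Dirac operator whose discrete spectrum is already empty. No genuine obstacle arises: the corollary is essentially a bookkeeping exercise once the observation that the critical threshold is absent for $\eta=0$ is in place.
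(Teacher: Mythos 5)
Your proposal is correct and follows exactly the route the paper intends: the corollary is a direct specialization of Theorem~\ref{theorem_self_adjoint}, Lemma~\ref{lemma_Birman_Schwinger}, Theorem~\ref{theorem_spectral_properties}, and Proposition~\ref{proposition_discrete_spectrum} to $\eta=0$, and the paper's own framing rests precisely on the observation you open with, namely that $-\tau^2\neq 4c^2$ for every real $\tau$, so the noncritical hypothesis is automatic. Your treatment of item (viii) via the duality of item (iv) together with Theorem~\ref{theorem_spectral_properties}(iii) mirrors the mechanism behind Corollary~\ref{corollary_no_discrete_spectrum}, and the remark on the trivial $\tau=0$ case is the right bookkeeping.
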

% \begin{proof}
%   The self-adjointness and the resolvent formula in point (i) are shown in Theorem~\ref{theorem_self_adjoint}. The proof of assertion~(ii) can be found in \cite[Proposition~3.1]{HOP17}. Items~(iii) and~(iv) are verified in Theorm~\ref{theorem_spectral_properties}. Points~(v), (vi) and (ix) are special cases of what is stated in Proposition~\ref{proposition_discrete_spectrum}. Item~(vii) is a direct consequenc of assertion~(ii). Finally, the Birman-Schwinger principle in (viii) is a special case of Lemma~\ref{lemma_Birman_Schwinger} and item~(x) follows immediately from Corollary~\ref{corollary_no_discrete_spectrum}.
% \end{proof}

In addition to what we know about the spectrum of $A_{0, \tau}$ in the purely scalar case from Corollary~\ref{corollary_scalar_interaction} an explicit formula for the quadratic form associated to $A_{0, \tau}^2$ is shown in \cite[Proposition~3.1]{HOP17}. This formula implies also that there are no discrete eigenvalues of $A_{0, \tau}$ for $\tau \geq 0$. For a further discussion of consequences of this interesting result we refer the reader to \cite{HOP17}.

\begin{cor} \label{corollary_scalar_interaction_form}
  Let $\tau \in \mathbb{R} \setminus \{ \pm 2 c\}$ and assume that $\Sigma$ is $C^4$-smooth. Then the following assertions hold.
  \begin{itemize}
    \item[$\textup{(i)}$] If $\tau \neq 0$, then for any $f \in \dom A_{0, \tau}$ 
    \begin{equation*}
      \begin{split}
          \| A_{0,\tau} f \|^2_{L^2(\mathbb{R}^3)^4} &= c^2 \int_{\mathbb{R}^3 \setminus \Sigma} \big|\nabla f \big|^2 \textup{d} x + (m c^2)^2 \int_{\mathbb{R}^3}  |f|^2 \textup{d} x +  c^2 \int_\Sigma M \big|f_+|_\Sigma\big|^2 \textup{d} \sigma\\
         &\qquad  - c^2 \int_\Sigma M \big|f_-|_\Sigma\big|^2 \textup{d} \sigma + \frac{2 m c^4}{\tau} \int_\Sigma \big|f_+|_\Sigma - f_-|_\Sigma\big|^2 \textup{d} \sigma
      \end{split}
    \end{equation*}
    holds, where $M$ is the mean curvature at $\Sigma$. 
    \item[$\textup{(ii)}$] If $\tau \geq 0$, then $\sigma_{\textup{disc}}(A_{0, \tau}) = \emptyset$.
  \end{itemize} 
\end{cor}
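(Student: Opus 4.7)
The starting point for (i) is the identity already obtained during the proof of Theorem~\ref{theorem_spectral_properties}(ii), which for $\eta=0$ reads
\begin{equation*}
\|A_{0,\tau}f\|_{L^2(\mathbb{R}^3)^4}^2 = c^2\|(\alpha\cdot\nabla)f\|_{L^2(\Omega_+\cup\Omega_-)^4}^2 + (mc^2)^2\|f\|_{L^2(\mathbb{R}^3)^4}^2 + B_\beta[f],
\end{equation*}
where $B_\beta[f]$ collects the two boundary terms involving $\beta$. The plan is first to trade $\|(\alpha\cdot\nabla)f\|^2_{L^2(\Omega_\pm)^4}$ for $\|\nabla f\|^2_{L^2(\Omega_\pm)^{12}}$ at the cost of a surface integral containing the mean curvature $M$. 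Writing $|(\alpha\cdot\nabla)f|^2 = |\nabla f|^2 + \sum_{j\neq k}\langle\partial_j f,\alpha_j\alpha_k\partial_k f\rangle$ and using $\alpha_j\alpha_k+\alpha_k\alpha_j=0$ for $j\neq k$, one integration by parts in $\Omega_\pm$ shows that the second sum contributes only a boundary integral; its purely normal part vanishes by the same anti-symmetry, so only a tangential expression remains. A second integration by parts along $\Sigma$, combined with the classical identity $\textup{div}_\Sigma\nu = 2M$, then produces the $\pm c^2\int_\Sigma M|f_\pm|_\Sigma|^2 d\sigma$ terms with the opposite signs coming from the opposite orientations on the two sides of $\Sigma$. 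The $C^4$-smoothness hypothesis ensures a $C^3$-extension of $\nu$ to a tubular neighborhood, which is exactly what is needed to make the two successive integrations by parts rigorous.

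Next I would combine $B_\beta[f]$ with the residual cross $\beta$-trace terms produced by the previous step. Exploiting the coupling condition $ic(\alpha\cdot\nu)(f_+|_\Sigma-f_-|_\Sigma)=-\tfrac{\tau}{2}\beta(f_+|_\Sigma+f_-|_\Sigma)$, together with $(\alpha\cdot\nu)\beta=-\beta(\alpha\cdot\nu)$ and $(\alpha\cdot\nu)^2=I_4$, a direct computation reduces all of these contributions to the single term $\frac{2mc^4}{\tau}\int_\Sigma|f_+|_\Sigma-f_-|_\Sigma|^2 d\sigma$. Reassembling the pieces gives the formula claimed in (i).

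For (ii), the case $\tau=0$ is immediate, since $A_{0,0}$ is the free Dirac operator with purely essential spectrum. For $\tau>0$, any discrete eigenvalue $\lambda$ of $A_{0,\tau}$ in the gap $(-mc^2,mc^2)$ would produce an eigenvalue $\lambda^2<(mc^2)^2$ of the nonnegative operator $A_{0,\tau}^2$, so it is enough to establish $\|A_{0,\tau}f\|^2\geq (mc^2)^2\|f\|^2$ for every $f\in\dom A_{0,\tau}$. By (i) this reduces to the non-negativity of
\begin{equation*}
Q[f] := c^2\int_{\mathbb{R}^3\setminus\Sigma}|\nabla f|^2 dx + c^2\int_\Sigma M\big(|f_+|_\Sigma|^2-|f_-|_\Sigma|^2\big) d\sigma + \frac{2mc^4}{\tau}\int_\Sigma|f_+|_\Sigma-f_-|_\Sigma|^2 d\sigma.
\end{equation*}
The main obstacle is the sign-indefinite mean curvature contribution. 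My plan is to convert $\int_\Sigma M|f_\pm|^2 d\sigma$ into a volume divergence using an extension $\nu_{\textup{ext}}$ of $\nu$ to a tubular neighborhood with $\div\nu_{\textup{ext}}|_\Sigma=2M$, so that the resulting volume cross terms can be controlled by $c^2\int|\nabla f|^2$ via a weighted Young inequality. The remaining surface contribution is then re-expressed, by means of the coupling condition, purely in terms of $f_+|_\Sigma-f_-|_\Sigma$, and the assumption $\tau>0$ renders it absorbable into the explicit jump term, which closes the argument.
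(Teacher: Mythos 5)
Your outline for part (i) is the right general strategy and matches the route of \cite[Proposition~3.1]{HOP17}, which the paper simply cites in lieu of a proof. Two cautionary remarks, though. First, the step you describe as ``a second integration by parts along $\Sigma$'' is the genuinely delicate part: the boundary contribution obtained after the first integration by parts is $\int_\Sigma\sum_{j\neq k}\nu_j\langle f_\pm,\alpha_j\alpha_k\partial_k^\Sigma f_\pm\rangle\,d\sigma$, a first--order tangential (surface Dirac) expression, not just $\operatorname{div}_\Sigma\nu$; showing that the derivative terms finally collapse to a pure $M|f_\pm|_\Sigma|^2$ integrand requires a careful symmetrization and, for the pieces coupling the two sides, the transmission condition. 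Second, your bookkeeping of the $\beta$-terms is off: writing $u:=f_+|_\Sigma-f_-|_\Sigma$, the coupling condition for $\eta=0$ gives $f_+|_\Sigma+f_-|_\Sigma=-\tfrac{2c}{\tau}\,i\beta(\alpha\cdot\nu)u$, and since $i\beta(\alpha\cdot\nu)$ is a self-adjoint involution a short computation shows the two surface terms from the proof of Theorem~\ref{theorem_spectral_properties}(ii) already equal $\tfrac{2mc^4}{\tau}\int_\Sigma|u|^2\,d\sigma$ on their own. So the mean-curvature contribution must come entirely from the $\|(\alpha\cdot\nabla)f\|^2\to\|\nabla f\|^2$ conversion, and there are no further ``residual cross $\beta$-trace terms'' to combine with it.

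The proposed argument for part (ii) has a genuine gap. Converting $\int_\Sigma M|f_\pm|^2\,d\sigma$ to a volume integral via the divergence theorem and controlling the gradient cross term by a weighted Young inequality necessarily produces an error of the form $C_\varepsilon\|f\|_{L^2}^2$ in the lower bound, i.e., at best $\|A_{0,\tau}f\|^2\ge\big((mc^2)^2-C'\big)\|f\|^2$. This is strictly weaker than the sharp inequality $\|A_{0,\tau}f\|^2\ge(mc^2)^2\|f\|^2$ that one must prove to rule out eigenvalues in $(-mc^2,mc^2)$; moreover, once the surface term has been traded for a volume integral, there is no ``remaining surface contribution'' to feed into the coupling condition, so the last step of your plan does not parse. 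In fact no analytic estimate is needed at all: the combination $c^2\int_{\mathbb{R}^3\setminus\Sigma}|\nabla f|^2\,dx + c^2\int_\Sigma M\big(|f_+|_\Sigma|^2-|f_-|_\Sigma|^2\big)\,d\sigma$ appearing in (i) is precisely $c^2\|(\alpha\cdot\nabla)f\|^2_{L^2(\mathbb{R}^3\setminus\Sigma)^4}$ — one is simply undoing the rewriting done to obtain (i) — and hence nonnegative, while $\tfrac{2mc^4}{\tau}\int_\Sigma|f_+|_\Sigma-f_-|_\Sigma|^2\,d\sigma\ge0$ for $\tau>0$. Adding these gives $\|A_{0,\tau}f\|^2\ge(mc^2)^2\|f\|^2$ for all $f\in\dom A_{0,\tau}$, hence $A_{0,\tau}^2\ge(mc^2)^2$ and $\sigma(A_{0,\tau})\cap(-mc^2,mc^2)=\emptyset$; the case $\tau=0$ is trivial. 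This algebraic reassembly, not a perturbative bound on the curvature term, is what ``the formula implies (ii)'' refers to.
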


Eventually, we state that the difference of the third powers of the resolvents of $A_{\eta, \tau}$ and $A_0$ is a trace class operator. This result is of interest for mathematical scattering theory,
as it ensures the existence and completeness of the wave operators for the scattering system 
$\{ A_{\eta, \tau}, A_0 \}$ and implies that the absolutely continuous parts of 
$A_{\eta, \tau}$ and $A_0$ are unitarily equivalent, cf. \cite[Chapter~0, Theorem~8.2]{Y10} 
and the standard definition of existence and completeness of wave operators. The proof of this result in the purely electrostatic case, i.e. when $\tau = 0$, can be found in \cite[Theorem~4.6]{BEHL17}, in the general case one can follow it almost word by word. Hence, we omit the proof here. Note that we have to assume some additional smoothness of $\Sigma$ here to ensure that the result is correct.

\begin{prop} \label{theorem_Schatten_von_Neumann}
  Assume that $\Sigma$ is $C^\infty$-smooth and let $\eta, \tau \in \mathbb{R}$ such that
  $\eta^2 - \tau^2 \neq 4 c^2$. Then for any $\lambda \in \mathbb{C} \setminus \mathbb{R}$ the operator
  \begin{equation*}
    (A_{\eta, \tau} - \lambda)^{-3} - (A_0 - \lambda)^{-3}
  \end{equation*}
  belongs to the trace class. In particular, the wave operators for the system $\{ A_{\eta, \tau}, A_0 \}$ exist and are complete, and the absolutely continuous parts of $A_{\eta, \tau}$ and $A_0$ are unitarily equivalent
\end{prop}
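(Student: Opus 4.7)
The plan is to emulate the proof of~\cite[Theorem~4.6]{BEHL17}, combining the Krein-type resolvent formula from Theorem~\ref{theorem_self_adjoint} with a telescoping identity for the difference of cubes and then invoking Schatten--von Neumann estimates for the potential operator $\Phi_\lambda$, its adjoint $\Phi_{\bar\lambda}^*$, and their compositions with powers of $R_0 := (A_0 - \lambda)^{-1}$. Fix $\lambda \in \mathbb{C}\setminus\mathbb{R}$ and, for brevity, set also $R := (A_{\eta, \tau}-\lambda)^{-1}$ and $\Lambda_\lambda := \big( I_4 + (\eta I_4 + \tau \beta) \mathcal{C}_\lambda \big)^{-1} (\eta I_4 + \tau \beta)$, so that Theorem~\ref{theorem_self_adjoint} together with Lemma~\ref{lemma_Birman_Schwinger}~(iii) gives $R - R_0 = -\Phi_\lambda \Lambda_\lambda \Phi_{\bar\lambda}^*$ with $\Lambda_\lambda$ bounded on $L^2(\Sigma)^4$. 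Starting from the algebraic identity
\begin{equation*}
R^3 - R_0^3 \;=\; R_0^2 (R - R_0) + R_0 (R - R_0) R + (R - R_0) R^2,
\end{equation*}
and iteratively replacing each remaining occurrence of $R$ on the right-hand side by $R_0 - \Phi_\lambda \Lambda_\lambda \Phi_{\bar\lambda}^*$, the trace class claim reduces to showing that finitely many operators of the form
\begin{equation*}
R_0^{j_0}\,\big(\Phi_\lambda \Lambda_\lambda \Phi_{\bar\lambda}^*\big)\,R_0^{j_1} \cdots R_0^{j_{n-1}}\,\big(\Phi_\lambda \Lambda_\lambda \Phi_{\bar\lambda}^*\big)\, R_0^{j_n}, \qquad j_0 + j_1 + \cdots + j_n = 2, \quad n \geq 1,
\end{equation*}
belong to the trace class $\mathfrak{S}_1(L^2(\mathbb{R}^3)^4)$.

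The essential analytic input exploits the $C^\infty$-smoothness of $\Sigma$ to gain Sobolev regularity on the compact surface. By~\eqref{equation_Phi_lambda_star} and iterated elliptic regularity for the free Dirac operator, $\Phi_{\bar\lambda}^* R_0^k$ extends to a bounded operator from $L^2(\mathbb{R}^3)^4$ into $H^{k+1/2}(\Sigma)^4$ for every $k \geq 0$, with the dual statement for $R_0^j \Phi_\lambda$. Because $\Sigma$ is a compact two-dimensional manifold, the Weyl law for its Laplace--Beltrami operator places the Sobolev embedding $H^s(\Sigma)^4 \hookrightarrow L^2(\Sigma)^4$ in the Schatten ideal $\mathfrak{S}_p$ for every $p > 2/s$. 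Using the cyclic property $T_1 T_2 \in \mathfrak{S}_1 \Leftrightarrow T_2 T_1 \in \mathfrak{S}_1$ for bounded factors, one rearranges each product above so that the spatial smoothing of total order $j_0 + \cdots + j_n = 2$ is absorbed into a single factor of the form $\Phi_{\bar\lambda}^* R_0^{\alpha} \Phi_\lambda$ acting on $L^2(\Sigma)^4$ with $\alpha \geq 2$; this operator has integral kernel on $\Sigma \times \Sigma$ of Sobolev regularity at least $H^{3}$, which together with the H\"older inequality for Schatten classes and the boundedness of $\Lambda_\lambda$ secures the trace class membership in $L^2(\mathbb{R}^3)^4$.

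I expect the main technical obstacle to be the careful bookkeeping of Schatten indices after the iterated substitution, in particular verifying that every term arising from the telescoping expansion really does carry two available $R_0$-smoothings that can be shifted onto the $\Phi$-factors --- this is precisely the reason the cubic, rather than the linear or quadratic, resolvent difference is the correct object for a trace class statement in $\mathbb{R}^3$. Once the trace class inclusion is established, the existence and completeness of the wave operators $W_\pm(A_{\eta,\tau}, A_0)$ together with the unitary equivalence of the absolutely continuous parts of $A_{\eta,\tau}$ and $A_0$ follow from the Kato--Birman invariance principle applied to a suitable monotone rational function of $t$ whose derivative reproduces the cubic resolvent structure, cf.~\cite[Chapter~0, Theorem~8.2]{Y10}.
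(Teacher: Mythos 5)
The paper itself gives no proof here --- it defers verbatim to \cite[Theorem~4.6]{BEHL17} and notes that the argument carries over ``almost word by word.'' Your plan to emulate that proof via a telescoping identity plus Krein's formula plus Schatten--von Neumann estimates is the right strategy, and most of the technical ingredients you identify are correct. But there is a concrete gap in the bookkeeping that would cause trouble if you executed this literally.

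Start from $R^3 - R_0^3 = R_0^2(R-R_0) + R_0(R-R_0)R + (R-R_0)R^2$ and substitute $R = R_0 + (R-R_0)$ into the remaining $R$'s. The resulting expansion does \emph{not} consist of terms with $j_0+\cdots+j_n = 2$; rather, the total power of $R_0$ drops every time a factor $R$ is replaced by $(R-R_0)$. Concretely, $(R-R_0)R^2$ produces $(R-R_0)R_0^2$ with $\sum j_i=2$, but also $(R-R_0)R_0(R-R_0)$ and $(R-R_0)^2 R_0$ with $\sum j_i = 1$, and $(R-R_0)^3$ with $\sum j_i = 0$. For the terms with $\sum j_i < 2$ it is simply not possible to ``absorb the smoothing of total order $2$ into a single factor $\Phi_{\bar\lambda}^* R_0^\alpha\Phi_\lambda$ with $\alpha\ge 2$'': there are not enough copies of $R_0$ in the product, and for $n\ge 2$ the smoothing is distributed over several $\Phi$-sandwiches. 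In particular your argument, as written, does not cover the worst term $(R-R_0)^3$ at all --- it carries no $R_0$-smoothing whatsoever, so no amount of cyclic rearrangement produces an $R_0^\alpha$ factor.

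The correct repair, which is also what makes the cubic power the right choice, is to use the Schatten class of $\Phi_\lambda$ and $\Phi_{\bar\lambda}^*$ themselves rather than relying solely on the $R_0$-powers. Since $\Phi_{\bar\lambda}^*$ maps $L^2(\mathbb{R}^3)^4$ boundedly into $H^{1/2}(\Sigma)^4$ (see~\eqref{equation_Phi_lambda_star} and~\eqref{trace_theorem}) and the embedding $H^{1/2}(\Sigma)^4 \hookrightarrow L^2(\Sigma)^4$ on the $2$-dimensional compact manifold $\Sigma$ belongs to $\mathfrak{S}_p$ for every $p>4$, one gets $\Phi_\lambda,\Phi_{\bar\lambda}^* \in \mathfrak{S}_p$ for all $p>4$, hence $R-R_0 = -\Phi_\lambda\Lambda_\lambda\Phi_{\bar\lambda}^* \in \mathfrak{S}_p$ for all $p>2$, and therefore $(R-R_0)^3\in\mathfrak{S}_1$. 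The intermediate terms with $1\le\sum j_i\le 2$ then follow by combining this with the higher regularity of $\Phi_{\bar\lambda}^* R_0^k$ and the H\"older inequality for Schatten ideals, taking into account the contribution of \emph{every} $\Phi$-sandwich in the product, not a single one. This is also where the extra smoothness of $\Sigma$ enters: to make sense of $\Phi_{\bar\lambda}^* R_0^k : L^2(\mathbb{R}^3)^4 \to H^{k+1/2}(\Sigma)^4$ for $k$ up to $2$ you need $\Sigma$ to be smoother than $C^2$, which is the reason the proposition assumes $C^\infty$. The final appeal to \cite[Chapter~0, Theorem~8.2]{Y10} for existence and completeness of the wave operators is then standard.
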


Finally, we formulate a result shown in \cite[Section~5]{BH17} about the spectral properties of $A_{\eta, \tau}$ in the case of critical interaction strengths. Again, the result is only known for purely electrostatic interactions, i.e. for $\eta = \pm 2 c$ and $\tau=0$. Nevertheless, the theorem below shows that the spectral properties of $A_{\eta, \tau}$ can be of a completely different type for the critical interaction strengths.
To formulate the result, we say that a surface $\Sigma$ contains a {\it flat part}, if there exists an open $\Sigma_0 \subset \Sigma$ such that $\Sigma_0$ is contained in a plane in $\mathbb{R}^3$. The complete proof of the following theorem as well as further results on the spectrum, a variant of Krein's resolvent formula, and the Birman-Schwinger principle for the self-adjoint closure of $A_{\pm 2c, 0}$ can be found in~\cite{BH17}.

\begin{thm} \label{theorem_spectrum_critical_case}
  Let $A_{\pm 2c, 0}$ be defined by~\eqref{def_A_eta}. Then $(-\infty, -m c^2] \cup [m c^2, \infty)$ belongs to $\sigma_\textup{ess}(\overline{A_{\pm 2c, 0}})$. If $\Sigma$ contains a flat part, then also $0 \in \sigma_\textup{ess}(\overline{A_{\pm 2c, 0}})$.
\end{thm}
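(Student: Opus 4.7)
For $(-\infty,-mc^2]\cup[mc^2,\infty)\subset\sigma_\textup{ess}(\overline{A_{\pm 2c,0}})$, my plan is a standard singular-Weyl-sequence argument exploiting the unboundedness of $\Omega_-$. Given $\lambda\in\sigma_\textup{ess}(A_0)=(-\infty,-mc^2]\cup[mc^2,\infty)$, pick a Fourier-based Weyl sequence $(h_n)\subset C_c^\infty(\mathbb{R}^3)^4$ for $A_0$ at $\lambda$ (normalized, weakly null, with $\|(A_0-\lambda)h_n\|\to 0$), and by translation arrange the supports to be pairwise disjoint and to escape to infinity inside $\Omega_-$. Such $h_n$ vanish near $\Sigma$, so the jump condition in~\eqref{def_A_eta} is satisfied trivially; hence $h_n\in\dom A_{\pm 2c,0}\subset\dom\overline{A_{\pm 2c,0}}$ with $\overline{A_{\pm 2c,0}}h_n=A_0h_n$, which gives the desired Weyl sequence for $\overline{A_{\pm 2c,0}}$ at $\lambda$.

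For $0\in\sigma_\textup{ess}(\overline{A_{\pm 2c,0}})$ under the flat-part hypothesis, my plan is to construct a singular Weyl sequence at $\lambda=0$ concentrated on the flat part. Without loss of generality assume $\Sigma_0\subset\{x_3=0\}$ with outer normal $e_3$. The key input is a fiberwise analysis of the model problem obtained by replacing $\Sigma$ by the entire plane: after Fourier transform in $x'=(x_1,x_2)$ this model is a direct integral of 1D Dirac operators $H(\xi')$ on $\mathbb{R}$ with a point interaction at $0$ of critical strength. With $\kappa(\xi'):=\sqrt{|\xi'|^2+m^2c^2}$ and $M_\pm(\xi'):=c\alpha\cdot(\xi',0)\pm ic\kappa(\xi')\alpha_3+mc^2\beta$, the anti-commutation relations~\eqref{equation_anti_commutation} yield $M_\pm(\xi')^2=0$, so $\ker M_\pm(\xi')$ is two-dimensional. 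The critical jump relation forces $V_+=\pm i\alpha_3 V_-$, and the identity $M_+(\xi')\,i\alpha_3=-i\alpha_3 M_-(\xi')$ (again via~\eqref{equation_anti_commutation}) ensures that $V_+\in\ker M_+(\xi')$ whenever $V_-\in\ker M_-(\xi')$. Hence $x_3\mapsto e^{-\kappa(\xi')|x_3|}V_\pm(\xi',\textup{sign}(x_3))$ lies in $\ker H(\xi')$ for every $\xi'$, placing $0$ in the spectrum of the model with infinite multiplicity.

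To transfer this to $\overline{A_{\pm 2c,0}}$, choose a neighborhood $U\subset\mathbb{R}^3$ of an interior point of $\Sigma_0$ in which $\Sigma\cap U\subset\Sigma_0$, together with cutoffs $\chi\in C_c^\infty(\mathbb{R}^2)$ and $\psi\in C_c^\infty(\mathbb{R})$ whose tensor product is supported in $U$ and equal to $1$ on a smaller region $U'$. For a sequence of tangential wavepackets $\phi_n\in C_c^\infty(\mathbb{R}^2)$ concentrated at distinct centers $x'_n$ deep inside the $x'$-projection of $U'$ (with shrinking concentration scale), I would set
\[
u_n(x',x_3):=\chi(x')\,\psi(x_3)\int_{\mathbb{R}^2}\hat\phi_n(\xi')\,e^{i\xi'\cdot x'}e^{-\kappa(\xi')|x_3|}V_\pm(\xi',\textup{sign}(x_3))\,d\xi'.
\]
By the choice of $U$, $u_n$ has vanishing trace on $\Sigma\setminus\Sigma_0$, and the integrand satisfies the critical jump relation fiberwise on $\Sigma_0$; therefore $u_n\in\dom A_{\pm 2c,0}\subset\dom\overline{A_{\pm 2c,0}}$. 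Since the integrand annihilates the differential expression away from $\{x_3=0\}$, we get $\overline{A_{\pm 2c,0}}u_n=-ic\,[\alpha\cdot\nabla(\chi\psi)]\,g_n$ where $g_n$ is the integral. Taking the $\phi_n$ to live deep inside $U'$ so that $g_n$ is essentially supported where $\chi\psi\equiv 1$ drives the commutator exponentially to zero in $L^2$, while the mutual separation of the centers $x'_n$ keeps $\|u_n\|$ bounded below and gives $u_n\rightharpoonup 0$.

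The principal obstacle is the quantitative cutoff/commutator estimate: the naive ansatz with a fixed $\chi$ applied to a pure plane-wave profile $e^{in\xi_*\cdot x'}e^{-\kappa_n|x_3|}V$ yields an error of order $\|\nabla\chi\|_{L^2}/\|\chi\|_{L^2}$, a positive constant \emph{independent} of $n$, so it is essential to simultaneously concentrate the wavepacket in $x'$ (using the infinite-dimensional freedom in $\xi'\mapsto V_\pm(\xi')\in\ker M_\pm(\xi')$) and to place the concentration in the region where the cutoffs are constant. A rigorous execution of these estimates, combined with the $H^{-1/2}$-formulation of the critical jump condition recalled in Theorem~\ref{theorem_critical_case}, constitutes the technical core of the proof and is carried out in~\cite[Section~5]{BH17}.
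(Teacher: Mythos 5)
The paper itself does not prove Theorem~\ref{theorem_spectrum_critical_case} but refers the reader to \cite[Section~5]{BH17}, so a line-by-line comparison against ``the paper's proof'' is not available. Your proposal nonetheless appears to be a faithful reconstruction of that argument. The first part (a translated, disjointly supported Weyl sequence for $A_0$ placed deep inside the unbounded set $\Omega_-$, vanishing near $\Sigma$ so the jump condition holds trivially) is a complete and correct argument. For the second part, the key algebraic facts you invoke do check out: with $M_\pm(\xi')=c\,\alpha\cdot(\xi',0)\pm ic\kappa(\xi')\alpha_3+mc^2\beta$ one has $M_\pm(\xi')^2=0$ by \eqref{equation_anti_commutation}, so $\dim\ker M_\pm(\xi')=2$; the identity $M_+(\xi')\,i\alpha_3=-i\alpha_3\,M_-(\xi')$ holds and shows $i\alpha_3$ maps $\ker M_\mp(\xi')$ bijectively onto $\ker M_\pm(\xi')$; and the $H^{-1/2}$ critical jump condition from Theorem~\ref{theorem_critical_case} reduces on $\{x_3=0\}$ to $f_+|_\Sigma=\pm i\alpha_3 f_-|_\Sigma$, compatible with this intertwining. (Your sign in ``$V_+=\pm i\alpha_3V_-$'' is off depending on which side of $\{x_3=0\}$ you take as $\Omega_+$, but this is a harmless bookkeeping issue since both $\pm i\alpha_3$ implement the bijection.) You also correctly identify the real difficulty: a fixed cutoff applied to a single decaying plane-wave profile gives an $O(1)$ commutator error, so one must superpose over $\xi'$ and concentrate in $x'$ deep inside the region where the cutoffs are $\equiv 1$. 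One small overstatement: you do not actually need the $H^{-1/2}$ formulation to put your $u_n$ in the operator domain — with $\hat\phi_n$ Schwartz the functions $u_n$ already lie in $H^1(\Omega_+)^4\oplus H^1(\Omega_-)^4$ with $L^2(\Sigma)$-traces, hence $u_n\in\dom A_{\pm2c,0}\subset\dom\overline{A_{\pm2c,0}}$. The quantitative commutator estimates are the genuine technical core, and deferring them to \cite{BH17} is consistent with how the present paper itself treats this theorem.
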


\section{Nonrelativistic limit}
\label{section_nonrelativistic_limit_singular_interaction}

In this section we study the nonrelativistic limit of Dirac operators with purely electrostatic 
or purely Lorentz scalar $\delta$-shell interactions, that means we study this limit of $A_{\eta, \tau}$
in the cases that either $\tau = 0$ or $\eta = 0$ which are of particular physical interest.
In the nonrelativistic limit one subtracts/adds the energy of the mass of the particle $m c^2$ from the total 
energy and computes the limit of the resolvent, as $c \rightarrow \infty$.
The expected result is the resolvent of a nonrelativistic Schr\"odinger operator
which describes the same physical problem with the same parameters times a projection onto the upper/lower 
components of the Dirac wave function. 
In our case we will see that the Dirac operator with an electrostatic or a scalar $\delta$-shell interaction
converges in the nonrelativistic limit to a Schr\"odinger operator with a $\delta$-potential of the same strength.
This gives a further justification for the usage of the operator $A_{\eta, 0}$ and $A_{0, \tau}$
as a Dirac operator with a singular $\delta$-interaction supported on $\Sigma$.
The presentation in this section follows closely \cite[Section~5]{BEHL17}.

First, we introduce some notations which are necessary to formulate the main result of this section, afterwards we discuss shortly the idea of the proof. As usual let $\Sigma \subset \mathbb{R}^3$ be the boundary of a compact $C^2$-domain. We define for $\eta \in \mathbb{R}$ the sesquilinear form
\begin{equation*} %\label{def_form_Schroedinger}
  \mathfrak{a}_\eta[f, g] := \frac{1}{2 m} (\nabla f, \nabla g)_{L^2(\mathbb{R}^3)^3} + (\eta f|_\Sigma, g|_\Sigma)_{L^2(\Sigma)},\quad
  f, g \in \dom \mathfrak{a}_\eta := H^1(\mathbb{R}^3).
\end{equation*}
It is not difficult to show that $\mathfrak{a}_\eta$ is symmetric, semibounded from below and closed,
see for instance \cite[Section~4]{BEKS94} or \cite{BLL13}. The associated self-adjoint operator $-\Delta_\eta$ is
\begin{equation*}
  \begin{split}
    -\Delta_\eta f &= \left( -\frac{1}{2 m} \Delta f_+ \right) \oplus \left(  -\frac{1}{2 m} \Delta f_+\right), \\
    \dom (-\Delta_\eta) &= \big\{ f = f_+ \oplus f_- \in \big(H^2(\Omega_+) \oplus H^2(\Omega_-)\big) \cap H^1(\mathbb{R}^3): \\
    & \qquad \qquad \qquad \qquad \qquad \qquad \qquad 2 m \eta f|_\Sigma = \partial_\nu f_-|_\Sigma - \partial_\nu f_+|_\Sigma \big\},
  \end{split}
\end{equation*}
where $H^2(\Omega_\pm)$ is the Sobolev space containing all functions for which the first and the second distributional derivatives belong to $L^2(\Omega_\pm)$,
and it is the Schr\"odinger operator with a $\delta$-potential of strength $\eta$ supported on $\Sigma$, i.e. formally it holds $-\Delta_\eta = -\frac{1}{2m} \Delta + \eta \delta_\Sigma$; cf. \cite[Section~3.2]{BLL13}. 
Next, we set
\begin{equation*}
  P_+ := \begin{pmatrix} I_2 & 0 \\ 0 & 0 \end{pmatrix} \quad \text{and} \quad
  P_- := \begin{pmatrix} 0 & 0 \\ 0 & I_2 \end{pmatrix}.
\end{equation*}
The following theorem treats the nonrelativistic limit of $A_{\eta, 0}$ and $A_{0, \tau}$. In particular, it shows that these operators are indeed the relativistic counterparts of $-\Delta_\eta$ with electrostatic and Lorentz scalar interactions, respectively. Note that the result holds for any $\eta \in \mathbb{R}$, as $4 c^2 > \eta^2$ for all sufficiently large $c$, and hence, we do not have to take care of the critical interaction strengths.

\begin{thm} \label{theorem_nonrelativistic_limit}
  For any $\eta, \tau \in \mathbb{R}$ and all $\lambda \in \mathbb{C} \setminus \mathbb{R}$
  there exists a constant $K > 0$ such that for all sufficiently large $c$
  \begin{equation*}
    \left\| \big(A_{\eta, 0} - (\lambda + m c^2)\big)^{-1} - (-\Delta_\eta - \lambda)^{-1} P_+ \right\| \leq \frac{K}{c}
  \end{equation*}
  and
  \begin{equation*}
    \left\| \big(A_{0, \tau} - (\lambda \pm m c^2)\big)^{-1} - \big(\pm(-\Delta_\tau) - \lambda\big)^{-1} P_\pm \right\| 
        \leq \frac{K}{c}.
  \end{equation*}
\end{thm}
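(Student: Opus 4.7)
The plan is to apply the Krein-type resolvent formula of Theorem~\ref{theorem_self_adjoint} at the shifted spectral parameter $z = \lambda \pm m c^2$; for all $c$ large enough the noncritical condition $\eta^2 - \tau^2 \neq 4 c^2$ holds automatically, so that
\begin{equation*}
(A_{\eta,\tau} - z)^{-1} = (A_0 - z)^{-1} - \Phi_z \big( I_4 + (\eta I_4 + \tau \beta) \mathcal{C}_z \big)^{-1} (\eta I_4 + \tau \beta) \Phi_{\bar z}^*.
\end{equation*}
The task then splits into passing to the limit $c \to \infty$ in each of the three types of factor at rate $1/c$, and comparing the outcome with the analogous Krein-type formula for the Schr\"odinger operator $-\Delta_\eta$ (respectively $\pm(-\Delta_\tau)$) in terms of the single-layer potential $\widetilde \Phi_\lambda$ and its boundary restriction $\widetilde{\mathcal C}_\lambda$ associated to $-\tfrac{1}{2m}\Delta - \lambda$, which is available in~\cite{BEKS94, BLL13}.

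The first concrete step is the asymptotic analysis of the Green function $G_z$. With $z = \lambda + m c^2$ one computes $z^2/c^2 - m^2 c^2 = 2 m \lambda + O(1/c^2)$ and $(z/c^2) I_4 + m \beta = 2 m P_+ + O(1/c^2)$, while the factor $(1/c)\,\alpha \cdot x$ in the explicit formula contributes $O(1/c)$. Hence
\begin{equation*}
G_z(x) = 2 m \, \frac{e^{i \sqrt{2 m \lambda}\,|x|}}{4 \pi |x|}\, P_+ + O(1/c),
\end{equation*}
and the leading term equals $\widetilde G_\lambda(x) P_+$, where $\widetilde G_\lambda$ is the Green function of $-\tfrac{1}{2m}\Delta - \lambda$. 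The analogous computation for $z = \lambda - m c^2$ produces $G_z(x) \to -\widetilde G_{-\lambda}(x) P_-$, corresponding to the antiparticle branch. Schur-type bounds on the difference kernels then promote these pointwise identities to operator-norm estimates of rate $1/c$ for $(A_0 - z)^{-1}$ and for the boundary operators $\Phi_z$, $\Phi_{\bar z}^*$, and $\mathcal C_z$.

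The delicate point is the Birman-Schwinger factor. Writing $L^2(\Sigma)^4 = \ran P_+ \oplus \ran P_-$, the limit of $I_4 + (\eta I_4 + \tau \beta)\mathcal C_z$ reduces on one summand to $I_2 + \mu\, \widetilde{\mathcal C}_\lambda$, with $\mu$ the appropriate effective scalar coupling ($\mu = \eta$ in the electrostatic case, $\mu = \pm \tau$ in the scalar case depending on the sign chosen in $\pm m c^2$), while it acts as the identity on the other summand. Invertibility of the nontrivial block follows from the nonrelativistic Birman-Schwinger principle, together with $\lambda \in \mathbb{C} \setminus \mathbb{R}$ and the self-adjointness of $-\Delta_\eta$ (respectively $\pm(-\Delta_\tau)$). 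A second-resolvent-identity argument, combined with the $O(1/c)$ convergence of $\mathcal C_z$ already established, then yields convergence of $\big( I_4 + (\eta I_4 + \tau\beta)\mathcal C_z \big)^{-1}$ in operator norm at the same rate.

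Assembling the three factors, the perturbation term in the Dirac Krein formula converges in norm to the perturbation term in the nonrelativistic Krein formula, composed on the right with $P_\pm$; the projections produced by the limits of $\Phi_{\bar z}^*$ and $\mathcal C_z$ combine via $P_\pm^2 = P_\pm$, leaving a single $P_\pm$ outside. Together with the free-resolvent limit this yields the claimed estimates. The main obstacle is the third step: identifying the limit of the Birman-Schwinger inverse \emph{with} the $O(1/c)$ rate requires the explicit block decomposition along the projections $P_\pm$, since a purely soft compactness argument would only give qualitative convergence and not a quantitative rate.
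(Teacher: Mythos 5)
Your plan matches the paper's own sketch: apply the Krein-type resolvent formula of Theorem~\ref{theorem_self_adjoint} at $z=\lambda\pm mc^2$, establish $O(1/c)$ convergence of the free resolvent and of $\Phi_z$, $\Phi_{\bar z}^*$, $\mathcal C_z$ towards their Schr\"odinger analogues sandwiched with $P_\pm$, handle the Birman--Schwinger factor blockwise, and compare with the Krein formula~\eqref{resolvent_Schroedinger} from \cite{BEKS94,BLL13}; this is precisely what the paper does, quoting \cite[Proposition~5.2]{BEHL17} for the quantitative factor estimates. One small bookkeeping point: for the antiparticle branch $z=\lambda-mc^2$ with $\eta=0$, the correct limit of the Birman--Schwinger block is $I_2+\tau\,\mathcal D_{-\lambda}$, i.e.\ the effective coupling stays $+\tau$ and the sign change is absorbed entirely into the spectral parameter $-\lambda$ and the overall minus sign of the free resolvent, rather than into a coupling $\mu=-\tau$ as your ``$\mu=\pm\tau$'' heuristic suggests.
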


An interesting aspect in Theorem~\ref{theorem_nonrelativistic_limit} is the fact that the resolvents converge in the operator norm. This means that the spectral properties of $A_{\eta, 0} - m c^2$ and $A_{0, \tau} \mp m c^2$ are asymptotically the same for large $c$ as those of $-\Delta_\eta$ and $\mp\Delta_\tau$, respectively. Since the spectral properties of Schr\"odinger operators with $\delta$-potentials are well-studied, see, e.g., the review \cite{E08} or the monograph \cite{EK15}, one can deduce many effects for the corresponding Dirac operators as well. As an example of this idea the following lemma is shown in \cite[Proposition~5.5]{BEHL17}; a similar statement can also be proved for $A_{0, \tau}$.

\begin{lem}
  Let $j \in \mathbb{N}$. Then there is an $\eta < 0$ sufficiently large such that the number of eigenvalues of $A_{\eta, 0}$ in the gap $(-m c^2, m c^2)$ of $\sigma_{\textup{ess}}(A_{\eta, 0})$
is larger than~$j$ for all sufficiently large $c$. 
\end{lem}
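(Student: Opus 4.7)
The plan is to combine the norm-resolvent convergence of the nonrelativistic limit (Theorem~\ref{theorem_nonrelativistic_limit}) with the well-known fact that Schr\"odinger operators with sufficiently attractive $\delta$-shell potentials possess arbitrarily many bound states. Each negative eigenvalue of $-\Delta_\eta$ should produce, for large enough $c$, a discrete eigenvalue of $A_{\eta,0}$ sitting at distance $\mathcal{O}(1)$ below $mc^2$, hence inside the spectral gap $(-mc^2, mc^2)$.

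First I would invoke the strong-coupling bound-state asymptotics for $-\Delta_\eta$ on the compact $C^2$-surface $\Sigma$: the number of eigenvalues of $-\tfrac{1}{2m}\Delta + \eta \delta_\Sigma$ below its essential spectrum $[0,\infty)$ tends to $\infty$ as $\eta \to -\infty$; see \cite{E03, EK15, BEKS94}. Given $j \in \mathbb{N}$, fix $\eta < 0$ with $|\eta|$ so large that $-\Delta_\eta$ has at least $j+1$ negative eigenvalues $\mu_1 \le \cdots \le \mu_{j+1} < 0$ (counted with multiplicity). Choose a positively oriented rectangular contour $\gamma \subset \rho(-\Delta_\eta)$ enclosing exactly $\{\mu_1,\ldots,\mu_{j+1}\}$ and staying at positive distance from $[0,\infty)$.

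Next I would transfer the count to $A_{\eta,0}$ via Riesz projectors. Theorem~\ref{theorem_nonrelativistic_limit} yields
\begin{equation*}
  \bigl(A_{\eta, 0} - (\lambda + mc^2)\bigr)^{-1} \longrightarrow (-\Delta_\eta - \lambda)^{-1} P_+, \qquad c \to \infty,
\end{equation*}
and by a standard pseudo-resolvent argument (the limit satisfies the resolvent identity because $P_+$ commutes with $(-\Delta_\eta - \lambda)^{-1}$) the convergence is uniform in $\lambda$ on the compact contour $\gamma$ and guarantees $\gamma \subset \rho(A_{\eta,0}-mc^2)$ for $c$ large. Integrating around $\gamma$ therefore gives norm convergence
\begin{equation*}
  Q_c := -\frac{1}{2\pi i}\oint_\gamma \bigl(A_{\eta, 0} - mc^2 - \lambda\bigr)^{-1} d\lambda \;\longrightarrow\; Q_\infty := E_{-\Delta_\eta}\bigl(\{\mu_1,\ldots,\mu_{j+1}\}\bigr)\, P_+.
\end{equation*}
Since the (scalarly extended) spectral projection $E_{-\Delta_\eta}(\cdot)$ commutes with the matrix projection $P_+$, the limit $Q_\infty$ is itself an orthogonal projection, with range $\mathrm{span}\{\psi_1,\dots,\psi_{j+1}\}\otimes P_+\mathbb{C}^4$ of dimension $2(j+1)$. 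For $c$ sufficiently large the translated contour $\gamma + mc^2$ lies entirely inside $(-mc^2,mc^2)$ (because $\mu_i > -2mc^2$ eventually), so $Q_c$ is the spectral projection of the self-adjoint operator $A_{\eta,0}$ onto its discrete eigenvalues in that region. Norm convergence of self-adjoint projections forces $\operatorname{rank} Q_c = \operatorname{rank} Q_\infty = 2(j+1)$ for all large $c$, giving at least $2(j+1) > j$ eigenvalues of $A_{\eta,0}$ inside the gap.

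The main technical obstacle is the presence of the projection $P_+$ in the limit: the object $(-\Delta_\eta - \lambda)^{-1}P_+$ is not the resolvent of a self-adjoint operator on $L^2(\mathbb{R}^3)^4$, so one cannot simply quote a black-box theorem on spectral convergence under norm resolvent convergence. One has to carry out the Cauchy contour integral directly, identify the limit as $E_{-\Delta_\eta}(\{\mu_i\})P_+$, and verify both that this is an orthogonal projection and that its rank is $2(j+1)$. Once this $P_+$-bookkeeping is in place, the rest of the argument is a routine application of norm convergence of finite-rank projections.
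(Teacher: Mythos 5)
Your proposal is correct and follows essentially the same route that the paper indicates for \cite[Proposition~5.5]{BEHL17}: combine the strong-coupling eigenvalue asymptotics for the Schr\"odinger operator $-\Delta_\eta$ with the norm-resolvent convergence of Theorem~\ref{theorem_nonrelativistic_limit}, and transfer the count via Riesz projections, keeping track of the extra factor of two produced by $P_+$. The only point worth making explicit when writing this out is that the rectangular contour $\gamma$ meets the real axis, whereas Theorem~\ref{theorem_nonrelativistic_limit} is stated for $\lambda\in\mathbb{C}\setminus\mathbb{R}$, so the pseudo-resolvent (first resolvent identity) argument you invoke is genuinely needed to propagate the estimate to real spectral-gap points of $-\Delta_\eta$ and to obtain uniformity on $\gamma$; once that is spelled out, the rank-stability of orthogonal projections under small norm perturbation gives the stated conclusion.
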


In the rest of this section we sketch how Theorem~\ref{theorem_nonrelativistic_limit} can be shown; for details on the proof for the statement on $A_{\eta, 0}$ see \cite[Section~5]{BEHL17}, the claim for $A_{0, \tau}$ can be verified with the same arguments. We also only discuss the convergence of $A_{\eta, 0}$ here.

\begin{proof}[Sketch of the proof of Theorem~\ref{theorem_nonrelativistic_limit}]
Having the Krein type resolvent formula from Theorem~\ref{theorem_self_adjoint} in mind, one expects that it suffices to investigate the limiting behavior of $(A_0 - (\lambda + m c^2))^{-1}$, $\Phi_{\lambda + m c^2}$,
$\mathcal{C}_{\lambda + m c^2}$ and $\Phi_{\bar{\lambda} + m c^2}^*$.
For that, we state first a similar resolvent formula for $-\Delta_\eta$.
We define for $\lambda \in \mathbb{C} \setminus \mathbb{R}$ the function
\begin{equation*} 
  K_\lambda(x) := 2 m \frac{e^{i \sqrt{2 m \lambda} |x|}}{4 \pi |x|}, \qquad x \in \mathbb{R}^3 \setminus \{ 0 \},
\end{equation*}
and recall that
\begin{equation*} 
  \left( -\frac{1}{2 m} \Delta - \lambda \right)^{-1} f(x) = \int_{\mathbb{R}^3} K_\lambda(x-y) f(y) \text{d} y,
  \qquad f \in L^2(\mathbb{R}^3),~ x \in \mathbb{R}^3,
\end{equation*}
see for instance \cite[Chapter~7.4]{T14}.
Moreover, we introduce the bounded integral operators
$\Psi_\lambda: L^2(\Sigma) \rightarrow L^2(\mathbb{R}^3)$ acting as
\begin{equation*} 
  \Psi_\lambda \varphi(x) := \int_\Sigma K_\lambda(x-y) \varphi(y) \text{d} \sigma(y),
  \qquad \varphi \in L^2(\Sigma), ~x \in \mathbb{R}^3,
\end{equation*}
and $\mathcal{D}_\lambda: L^2(\Sigma) \rightarrow L^2(\Sigma)$,
\begin{equation*}
  \mathcal{D}_\lambda \varphi(x) := \int_\Sigma K_\lambda(x-y) \varphi(y) \text{d} \sigma(y),
  \qquad \varphi \in L^2(\Sigma),~ x \in \Sigma.
\end{equation*}
Furthermore, a simple calculation shows that the adjoint 
$\Psi_\lambda^*: L^2(\mathbb{R}^3) \rightarrow L^2(\Sigma)$ is 
\begin{equation*}
  \Psi_\lambda^* f(x) = \int_{\mathbb{R}^3} K_{\bar{\lambda}}(x-y) f(y) \text{d} y,
  \qquad f \in L^2(\mathbb{R}^3), ~x \in \Sigma.
\end{equation*}
Then it is verified, e.g., in \cite[Theorem~3.5]{BLL13} or \cite[Lemma~2.3]{BEKS94} that
for all $\lambda \in \mathbb{C} \setminus \mathbb{R}$ the operator $I_1 + \eta \mathcal{D}_\lambda$ is boundedly invertible in $L^2(\Sigma)$ and 
\begin{equation} \label{resolvent_Schroedinger}
  (-\Delta_\eta - \lambda)^{-1} = \left( -\frac{1}{2 m} \Delta - \lambda \right)^{-1}
      - \Psi_\lambda (I_1 + \eta \mathcal{D}_\lambda)^{-1} \eta \Psi_{\bar{\lambda}}^*.
\end{equation}
Now concerning the limiting behavior of $(A_0 - (\lambda + m c^2))^{-1}$, $\Phi_{\lambda + m c^2}$,
$\mathcal{C}_{\lambda + m c^2}$ and $\Phi_{\bar{\lambda} + m c^2}^*$ it is proven in \cite[Proposition~5.2]{BEHL17} that there exists for any $\lambda \in \mathbb{C} \setminus \mathbb{R}$ a constant $K > 0$ independent of $c$ such that 
% \begin{subequations} \label{nonrelativistic_convergence}
\begin{equation*}%\label{nonrelativistic_convergence}
  \begin{split}
%   \label{convergence_free_resolvent}
  \left\| \big(A_0 - (\lambda + m c^2)\big)^{-1} - \left( -\frac{1}{2 m} \Delta - \lambda \right)^{-1} P_+ \right\| &
      \leq \frac{K}{c}; \\
%   \label{convergence_Phi_lambda}
  \| \Phi_{\lambda + m c^2} - \Psi_{\lambda} P_+ \| \leq \frac{K}{c}; \qquad
%   \label{convergence_Phi_lambda_star}
%   \| \Phi_{\lambda \pm m c^2}^* \mp \Psi_{\pm \lambda}^* P_\pm \| \leq \frac{K}{c};
%   \label{convergence_C_lambda}
  \| \mathcal{C}_{\lambda + m c^2} - \mathcal{D}_{\lambda}& P_+ \| \leq \frac{K}{c}.
  \end{split}
\end{equation*}
% \end{subequations}
Combining this with the resolvent formula for $A_{\eta, 0}$ from Theorem~\ref{theorem_self_adjoint} and~\eqref{resolvent_Schroedinger} one deduces the claim of Theorem~\ref{theorem_nonrelativistic_limit}.
\end{proof}

%\begin{acknowledgements}
%If you'd like to thank anyone, place your comments here
%and remove the percent signs.
%\end{acknowledgements}

% BibTeX users please use one of
%\bibliographystyle{spbasic}      % basic style, author-year citations
%\bibliographystyle{spmpsci}      % mathematics and physical sciences
%\bibliographystyle{spphys}       % APS-like style for physics
%\bibliography{}   % name your BibTeX data base

% Non-BibTeX users please use
\bibliographystyle{alpha}

\newcommand{\etalchar}[1]{$^{#1}$}

\end{document}